\newtheorem{theorem}{Theorem}[section]
\newtheorem{lemma}[theorem]{Lemma}
\newtheorem{proposition}[theorem]{Proposition}
\theoremstyle{definition}
\newtheorem{definition}{Definition}[section]
\newtheorem{example}{Example}[section]
\theoremstyle{remark}
\newtheorem{remark}[theorem]{Remark}
\newcommand{\R}{\mathbb R}
\newcommand{\LP}{\left(}
\newcommand{\RP}{\right)}
\newcommand{\LS}{\left[}
\newcommand{\RS}{\right]}
\newcommand{\LC}{\left\{}
\newcommand{\RC}{\right\}}
\newcommand{\MV}{\,\middle \vert\,}
\begin{document}

\title[Polynomial parametrization of some interesting knotted surfaces]{Polynomial parametrization of some interesting knotted surfaces}

%%%%%%%%%%%%%%%%%%%%%%%%%
\author[L. H. Kauffman]{Louis H. Kauffman}
\address{Department of Mathematics, Statistics and Computer Science \\ 
	University of Illinois at Chicago \\ 
	851 South Morgan Street\\
	Chicago, IL, 60607-7045}
\email{kauffman@uic.edu}

\author[T. MAHATO]{Tumpa Mahato}
\address{Department of Mathematics\\Indian Institute of Science Education and Research Pune, Dr. Homi Bhaba Road, Pune, Maharashtra, 411008, India.}
\email{2mpa.m80@gmail.com\\ tumpa.mahato@students.iiserpune.ac.in}

\author[R. Mishra]{Rama Mishra}
\address{Department of Mathematics\\Indian Institute of Science Education and Research Pune, Dr. Homi Bhaba Road, Pune, Maharashtra, 411008, India.}
\email{r.mishra@iiserpune.ac.in}

\makeatletter
\@namedef{subjclassname@2020}{%
	\textup{2020} Mathematics Subject Classification}
\makeatother
\subjclass[2020]{$57K10, 57K12, 57K14, 57K45$}

\keywords{Spun knots, twist spun knots, welded knots, ribbon torus knots, polynomial parametrization}

\begin{abstract}
	We discuss methods to construct a polynomial parametrization of some interesting knotted surfaces (knotted spheres, knotted tori and knotted planes) and provide examples.
\end{abstract}
\maketitle

\section{Introduction} 

Knots have a rich and long history and have been used for various activities like fishing, hunting, and building. The knots that we see are one dimensional objects inside our three dimensional space. To ensure a most simple form of a knot, one simplifies it without cutting it and mathematicians call it {\it ambient isotopy} \cite{Colin}. Many topological and geometric properties of a knot can be inferred if one can produce a parametrization 
of the knot using simple functions. Knot parametrizations are studied for {\it classical knots} by many mathematicians \cite{Shastri, Kau98, PraMis, Dun}.  Similar to a rope getting knotted, even surfaces (such as plane, sphere, torus) can get knotted. Surface knots are seen in computer graphics, physics and material science. In material science and textile, microscale   knots \cite{Sherif} are example of surface knots.
It is believed that these knotted surfaces provide stability and durability to the structure \cite{Moestopo}. Thus interesting knotted surfaces may help material scientists  design different and aesthetically  elegant structures.  Knotted surfaces are difficult to visualize due to their complex topology.
Thus if one can represent these knotted surfaces by nice parametric equations, one is able to judge its geometric properties that in turn provides an information on the strength it can create to the material it is used for.

In this paper, we have provided examples of knotted spheres, knotted tori and knotted planes. Our examples rely on constructing these surfaces with the help of one dimensional knots such as classical knots,  welded knots and long knots. We provide mathematical formulation to capture these constructions and use the knot parametrization of classical and long knots to parametrize these knotted surfaces. Thus, we can provide plenty of examples to be utilized by the scientists and engineers for their practical purposes.

This paper is organized as follows. Section \ref{sec:surface} includes the basic definitions required to study surface knots. Section \ref{sec:sphere} is divided into two subsections, Section \ref{sec:spin} and Section \ref{sec:twistspin} defines two constructions, spinning and twist spinning respectively for obtaining 2-knots from classical knots and discuss their parametrizations. In 
Section \ref{sec:ribbon}, we provide a brief exposition of the welded knots and its relationship with ribbon torus knots and then discuss the method to find a polynomial parametrization of the ribbon torus knots using that relationship. In Section \ref{sec:plane}, we discuss methods of constructing knotted planes from long classical knots. For each construction we have included the images generated from {\it Mathematica}.

\section{Basics on surface knots}\label{sec:surface}

  To study surface knots rigorously, we first recall several foundational concepts related to embeddings of manifolds. These definitions are essential for understanding the structure and classification of surfaces embedded in four-dimensional space.

\begin{definition}
	
	Let $N$ be an $n$-dimensional manifold embedded in an $m$-dimensional manifold $M$. We call an embedding $f:N \rightarrow M$ to be {\it proper} if every compact subset $K \subset f(N)$ has a compact preimage $f^{-1}(K)=\{x \in N \vert f(x) \in K\}$. In other words, the manifold $f(N)$ is proper if $\partial f(N)= f(N) \cap \partial M$.
\end{definition}

 To ensure that surface embeddings behave nicely in small neighbourhoods, we need a notion of local flatness.
\begin{definition}
	Let $f:N\to M$ be a proper embedding of smooth manifolds. This embedding is said to be 
  {\it  locally flat}  at a point $x \in M$ if there exists a regular neighborhood $U$ of $x$ in $M$ such that the pair $(U,U \cap f(N))$ is homeomorphic to the standard ball pair $(D^{m},D^{n})$, where $D^{k}$ is the unit ball in $\R^{k}$, centred at the origin.
	We say that $f$ is {\it locally flat} if it is locally flat at every point of $M$.
\end{definition}

 Not all proper embeddings are locally flat. Examples of non-locally flat embeddings can be found in \cite{Kam}, highlighting the necessity of this condition in knot theory.

With these foundational notions established, we now introduce the definitions of surface knots and surface links.
\begin{definition}
	A {\it surface link} is a proper, locally flat embedding of a closed surface $F$ in $\R^{4}$ or $S^{4}$.  If $F$ is a connected surface then it is called a {\it surface knot}. When $F=S^{2}$, it is called a {\it 2-knot} or {\it $2$-dimensional knot}.
	
\end{definition}

 Surface knots can also appear in non-compact forms, leading to the notion of long surface knots.

\begin{definition}
	A \emph{long $2$-knot} is a proper, locally flat embedding of $\R^2$ into $\R^4$ that is standard outside a compact region.
\end{definition}

\begin{remark}
	Every $2$-knot in $S^4$ naturally gives rise to a long $2$-knot in $\R^4$ by removing a point from the embedded $S^2$ and its image from $S^4$. This converts the compact embedding into a non-compact one, effectively yielding a long version of the $2$-knot.
\end{remark}
    Now, we recall an interesting class of knotted surfaces, called ribbon surface knots.
\begin{definition}
	Let $M$ be an compact $3$-manifold immersed in $\R^{4}$ and $f:M \rightarrow \R^{4}$ be the immersion map. A connected component $D$ of the double point set is called a {\it ribbon singularity} if the preimage $f^{-1}(D)$ is the union of $2$-disks $D_{1}$ and $D_{2}$ such that $D_{1}$ is a properly embedded 2-disk in $M$ and $D_{2}$ is an embedded $2$-disk in the interior of $M$.
\end{definition}

\begin{definition}\label{def:ribbon}
	Let $\Delta= \Delta_{1} \sqcup \cdots \sqcup \Delta_{k}$ and $\mathcal{B}= \mathcal{B}_{1} \sqcup \cdots \sqcup \mathcal{B}_{l}$ be two disjoint collections of $3$-balls embedded in $\R^{4}$, with each component of $\Delta$ and $\mathcal{B}$ is called a base and a band respectively. We  parametrize each band by $h_{i}: D^{2} \times [0,1] \rightarrow \R^{4}$ such that 
	\begin{enumerate}[itemsep=0pt]
		\item For each $i=1,2,\cdots,l$,
			\[\partial \Delta \cap h_{i}(D^{2}\times [0,1])= h_{i}(D^{2} \times \partial[0,1]), \quad \text{and}\]
			\item If $h_{i}(D^{2} \times [0,1])$ intersects a base, then it intersects this base in meridional 2-disks that are prpoerly embedded in the band and contained in the interior of the base:
		\[h_{i}(D^{2} \times [0,1]) \cap \Delta_{j} = h_{i}(D^{2} \times \{t\})\] 
		for $t \in (0,1)$. And all these intersections are ribbon singularities (Fig. \ref{fig:ribbon presentation}).
	\end{enumerate} 
	Now the surface given by
	$F= \LS \partial \Delta \, \setminus \, \cup_{i=1}^{l}h_{i}(D^{2} \times [0,1])\,\RS \bigcup \, \cup_{i=1}^{l}h_{i}(\partial D^{2} \times [0,1]), $
	is called a {\it ribbon surface knot} (Fig. \ref{fig:ribbon knot}).
	\begin{itemize}
		\item If $l=k-1$ then $F$ is homeomorphic to a $2$-sphere and $F$ is called a {\it ribbon 2-knot} or a {\it ribbon sphere}.
		\item If $F$ is homeomorphic to a torus then we call it a {\it ribbon torus-knot}.
		\item The pair $(\Delta,\mathcal{B})$ is called a {\it ribbon presentation} of $F$ with $l$ fusion bands.
	\end{itemize}
\end{definition}
The intersections of type (1) and (2) are depicted in Fig. \ref{fig:ribbon presentation} for classical case (bottom) and surface case (top). Fig. \ref{fig:ribbon knot} shows a broken surface diagram of a ribbon surface knot where parts of the surface is removed to indicate the over/under information similar to classical knot theory.
\begin{figure}[H]
	\centering
	\includegraphics[width=0.5\linewidth]{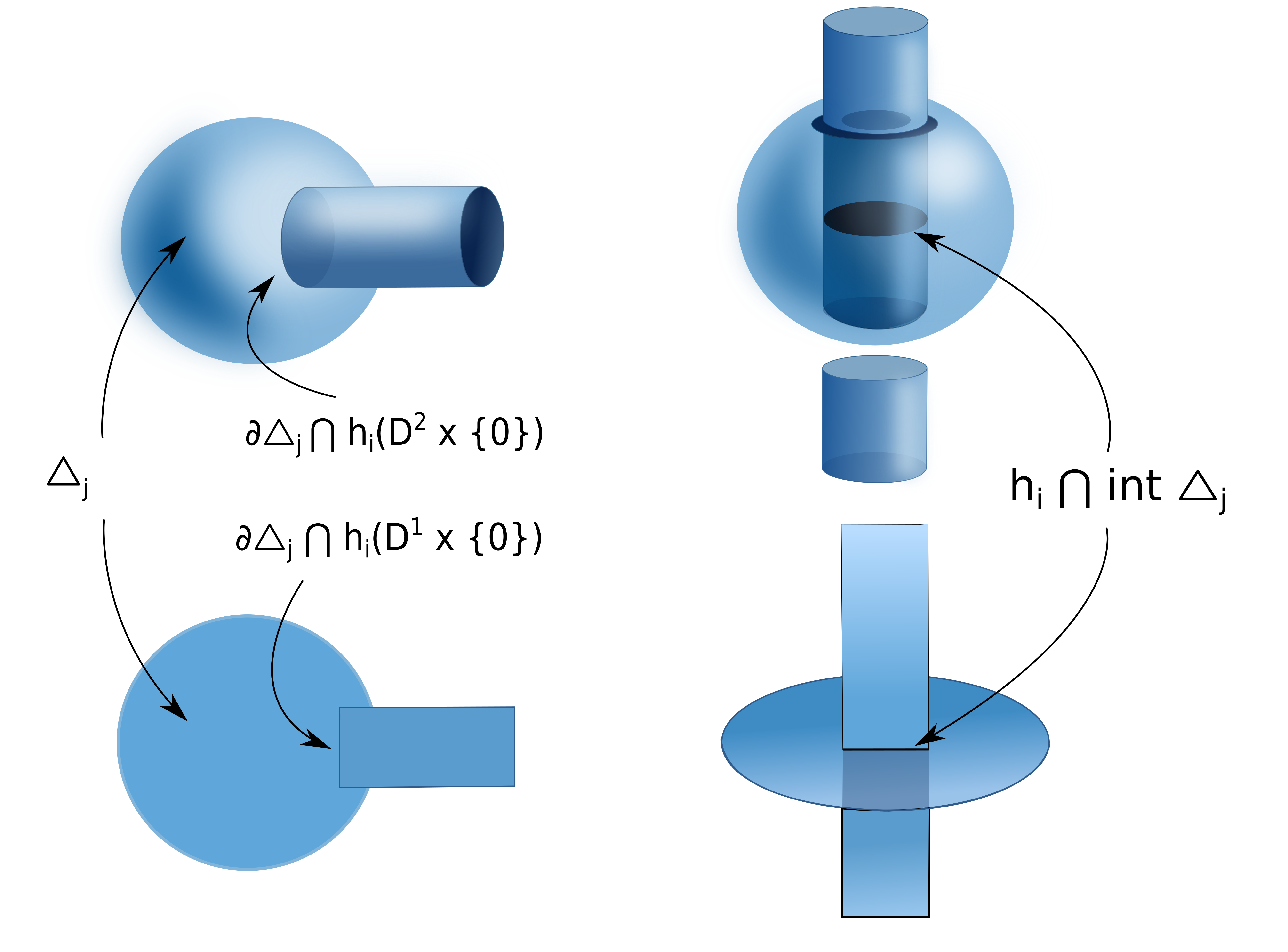}
	\caption{Ribbon presentation in dimension one and two}
	\label{fig:ribbon presentation}
\end{figure}
\begin{figure}[H]
	\centering
	\includegraphics[width=0.4\linewidth]{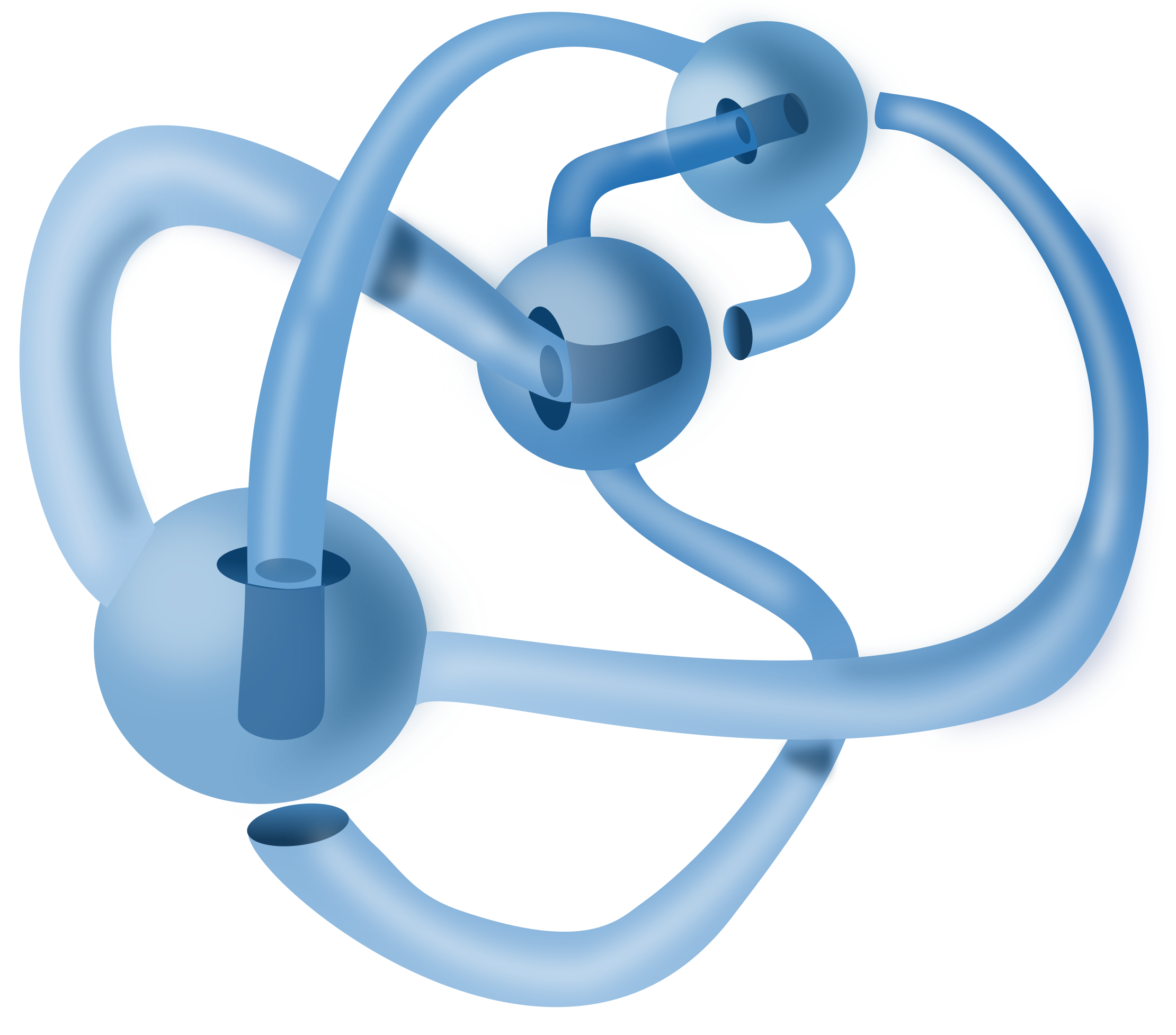}
	\caption{A broken surface diagram of a ribbon surface knot}
	\label{fig:ribbon knot}
\end{figure}

For a detailed exposition of surface knots and their topological subtleties, we refer the reader to \cite{CarSai,Glu,Kam}. 

In the upcoming sections we will discuss knotted spheres, knotted tori and knotted planes.  Our main focus is to provide parametric equations, so in each case, we have attempted only those classes of knotted surfaces which arise naturally from the one dimensional knots so that their parametrization can be used.

\section{ Parametrizing  knotted spheres  } \label{sec:sphere}

A two dimensional sphere can get knotted inside a four dimensional space $\mathbb{R}^4$ or $S^4$. One can study different isotopy classes with the help of invariants.  These $2$-dimensional knots are also known as $2$-knots. Since these knots are in $\mathbb{R}^4$  it is difficult to visualize them. Moreover, there is no uniform method to enumerate them and study them in order.  Simplest methods to visualize a knotted $S^2$ is through it projection into some three dimensional space or to look at the {\it movie } \cite{CarSai}. Our goal is to produce parametric equations that represent interesting knotted spheres. It is known that certain  operations  (spinning, twist spinning, roll spinning \cite{Fox}) performed on a classical knot inside $\mathbb{R}^3$ result into a knotted sphere inside $\mathbb{R}^4$. Thus with a parametric equations of a classical knot in hand, if we can formulate these operations mathematically, we obtain parametrization of such knotted spheres. We have succeeded in constructing spinning and $d$-twist ($d\geq 0$) spinning operations and the knots obtained are called {\it spun $2$-knots} and {\it $d$-twist spun $2$-knots}. We describe these constructions in Section 3.1 and Section 3.2 respectively.

 \subsection{Spun $2$-knots}\label{sec:spin}

E.~Artin \cite{Art} introduced a way to construct $2$-knots by spinning a knotted arc embedded in the half-space $\R^{3}_{+}$ around $\R^{2}$. The resulting $2$-knots are called {\it spun 2-knots}. Spun knots represent the simplest class of $2$-knots.
  Their construction is described as follows. In $\R^{4}$, consider the upper half space $\R^{3}_{+}=\LC(x, y, z, 0): z\geq 0 \RC$
with the boundary
$\partial \R^{3}_{+}=\R^{2}=\LC(x, y, 0, 0)\RC.$
Now, locus of a point $x=(x, y, z, 0)$ in $\R^{3}_{+}$, rotated in 4-space about $\R^{2}$ is given by
\[\LC(x, y, z \,\cos \theta, z \,\sin \theta) \MV  0 \leq\theta\leq 2\pi \RC.\]
	\begin{figure}[H]
	\centering
	\includegraphics[width=0.23\textwidth]{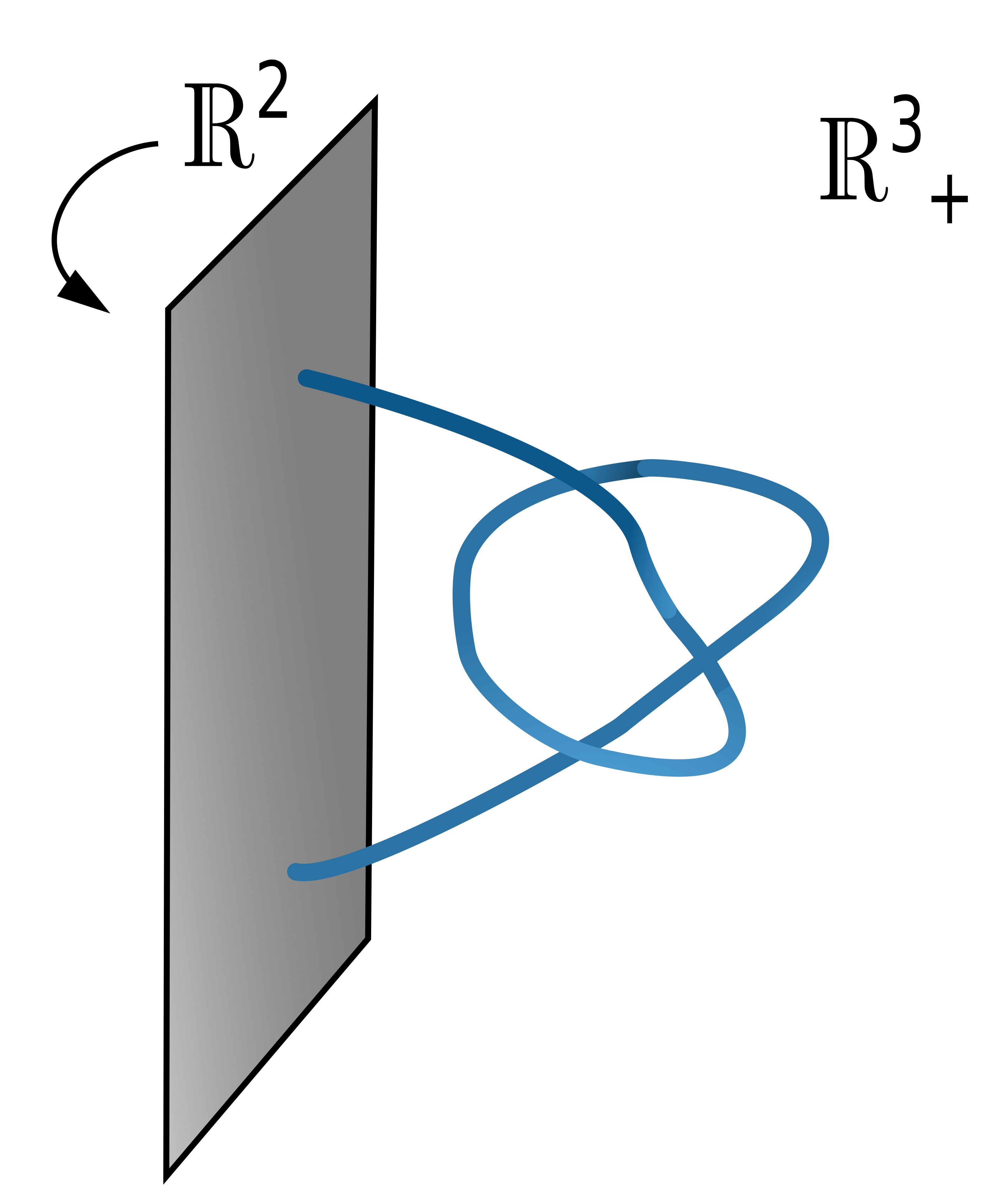}
	\caption{Spinning}
	\label{fig: arc in upper space}
\end{figure}
To construct a spun 2-knot, we choose a properly, locally flatly embedded arc $k$  in $\R^{3}_{+}$ i.e $k$ is embedded in $\R^{3}_{+}$ locally flatly and intersects $\partial \R^{3}_{+}$ transversely only at the endpoints (Fig. \ref{fig: arc in upper space}). Then we spin $\R^{3}_{+}$ along $\R^{2}$ by $360^{o}$ and the continuous locus of the arc $k$ produces the spun 2-knot given by
\[\LC(x, y, z \,\cos \theta, z \,\sin \theta) \MV (x, y, z)\in k, 0 \leq\theta\leq 2\pi \RC.\]

\begin{theorem}\label{th:spun}
	Given a classical knot $K$, there exist polynomials $f(s,t)$, 
	$g(s,t), h(s,t)$ and $ p(s,t)$ in two variables 
	$s$ and $t$ such that for some interval $[a,b]$ the image of  $[a,b]\times [0,2\pi]$ under the map $\phi: \R^2 \to \R^4$ defined by $$\phi ((s,t))= \LP f(s,t), g(s,t), h(s,t), p(s,t) \RP$$ is isotopic to the spun of $K$.
\end{theorem}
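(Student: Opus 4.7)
The plan is to lift a polynomial parametrization of the classical knot $K$ to a parametrization of its spun $2$-knot via Artin's rotation formula. Concretely, I would start from a polynomial parametrization $(P(s), Q(s), R(s))$, with $P, Q, R \in \mathbb{R}[s]$, of a knot isotopic to $K$; such a parametrization exists by Shastri's theorem and the refinements of \citep{Shastri, PraMis}. Once this is in hand, the spun of $K$ should be obtainable by a single closed-form formula.

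The first step is to arrange that the polynomial curve serves as an arc $k \subset \mathbb{R}^3_+$ of the kind required by Artin's construction, namely a properly, locally flatly embedded arc meeting $\partial \mathbb{R}^3_+ = \mathbb{R}^2$ transversely only at its two endpoints. I would do this by breaking the closed polynomial knot at a single point, pushing a short sub-arc down onto the plane $\{z = 0\}$ without creating new self-intersections, and then, by replacing $R(s)$ with a small polynomial perturbation, forcing $R(a) = R(b) = 0$ while keeping $R(s) > 0$ for $s \in (a,b)$ and preserving the knot type. With such an arc-parametrization $k(s) = (P(s), Q(s), R(s))$ at hand, the second step is to apply the rotation formula reviewed above and define
\[
\phi(s,t) = \bigl(P(s),\, Q(s),\, R(s)\cos t,\, R(s)\sin t\bigr), \qquad (s,t) \in [a,b] \times [0,2\pi].
\]
Each coordinate is then a polynomial in $s$, $\cos t$ and $\sin t$, which is the operative sense of polynomial parametrization used throughout this paper, and by Artin's definition the image of $\phi$ is exactly the spin of the arc $k$, hence isotopic to the spun $2$-knot of $K$.

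The hard part, I expect, will be the polynomial engineering in the first step: producing an explicit polynomial modification of $R$ that forces the vanishing of $R$ at the endpoints, keeps the curve embedded with the correct knot type, and is compatible with the existing polynomial structure of $P$ and $Q$. Everything after that is essentially immediate: the spinning formula is polynomial in $s, \cos t, \sin t$ by inspection, and its image is isotopic to the spun of $K$ directly from the definition of spinning.
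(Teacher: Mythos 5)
Your construction of the spinning map itself is exactly the paper's: take a polynomial arc $(f(t),g(t),h(t))$ in $\R^3_+$ with $h(a)=h(b)=0$ and $h>0$ on $(a,b)$, and rotate via $(f(t),g(t),h(t)\cos s,h(t)\sin s)$. (Your worry about ``breaking a closed polynomial knot'' is unnecessary, since Shastri-type polynomial knots are already long knots, i.e.\ embeddings of $\R$ in $\R^3$; the paper simply modifies the last coordinate of such a parametrization so that it vanishes at the two endpoints and is positive in between, as in replacing $t^4-4t^2$ by $-t^4+4t^2+3$ for the trefoil.)

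The genuine gap is in your last step. The theorem asserts the existence of honest polynomials $f(s,t),g(s,t),h(s,t),p(s,t)$ in the two variables $s$ and $t$, and your final parametrization still contains $\cos t$ and $\sin t$. Your claim that ``polynomial in $s$, $\cos t$ and $\sin t$'' is ``the operative sense of polynomial parametrization used throughout this paper'' is not correct: the paper means literal polynomials, and it closes this gap by replacing $\cos s$ and $\sin s$ with their Chebyshev polynomial approximations $C(s)$ and $S(s)$ on $[0,2\pi]$, setting $f(s,t)=f(t)$, $g(s,t)=g(t)$, $h(s,t)=h(t)\,C(s)$, $p(s,t)=h(t)\,S(s)$. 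This substitution is the step your proposal omits entirely, and it carries its own (implicit) obligation --- the approximation must be close enough that the perturbed image is still embedded and isotopic to the spun knot --- which neither you nor, admittedly, the paper addresses in detail, but which at least must be invoked to land on the statement as written.
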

\begin{proof}
	Let us choose a polynomial representation $\phi$ of $K$ such that  $\phi: [a,b] \rightarrow \R^{3}_{+}$ of the knotted arc $k$ is given by 
	$\phi(t) \rightarrow \LP f(t), g(t), h(t)  \RP$
	such that $h(a)=0=h(b)$ and $h(t)>0$ for $t \in (a,b)$. Now the spinning construction gives a map $F:[a,b]\times [0, 2\pi]\to \R^4$ defined by
	\[F(s,t)=\LP f(t),\, g(t),\,\, h(t)\,\cos s,\, h(t)\,\sin s \RP\] 
	that represents $spun(K)$.
	Using a point set topology argument, one can prove that the image of $F$ in $\R^4$ is homeomorphic to $S^2$. Now, we replace the trigonometric functions $\cos s$ and $\sin s$ with their Chebyshev approximations inside the interval $[0, 2\pi]$   \cite{Abu}. Let the Chebyshev approximations of $\cos s$ and $\sin s$ inside the interval $[0,2\pi]$ be denoted by $C(s)$ and $S(s)$, respectively. Then by choosing $f(s,t)=f(t)$, $g(s,t)=g(t)$, $h(s,t)=h(t)\,C(s)$ and $p(s,t)=h(t)\,S(s)$, we obtain a polynomial  parametrization of $spun(K)$. This completes the proof.
\end{proof}

\begin{example}[The spun trefoil knot] Let us take the following polynomial representation of the long trefoil knot \cite{Shastri} given by
	\[ \LC \LP t^3-3t, t^5-10t , t^4-4t^2  \RP \MV t \in \R  \RC .\] 
	We change the function $(t^4-4t^2)$ to $(-t^4+4t^2+3)$
	which has real roots $t= -2.1554 $ and $t=2.1554 $. Therefore, the knotted arc $k$ (Fig. \ref{fig: long trefoil knot}) is given by
	$$ \LC \LP t^3-3t, t^5-10t , -t^4+4t^2 +3 \RP \MV t \in [-2.1554,2.1544] \RC .$$ 
	and the spun trefoil knot is given by
	\[\LC \LP t^3-3t, t^5-10t , (-t^4+4t^2 +3)\,C(s), (-t^4+4t^2 +3)\,S(s) \RP \RC\]
	for $t \in [-2.1554,2.1544], s\in [0, 2\pi]$. For the final polynomial parametrization we take the Chebyshev approximations $C(s)$ and $S(s)$ of $\cos s$ and $\sin s$ in $[0,2\pi]$ respectively as follows:
       \begin{equation}\label{eq:chebcos}
		\begin{split}
			C(s) = \, -0.0000193235  s^8+0.000485652  s^7 -  0.00399024  s^6+0.0081095  s^5 
			 +0.0265068  s^4 &\\
			  +0.0163844  s^3-0.509175  s^2 
			 +0.00205416 s+0.999921, \qquad &
		\end{split}
		\end{equation}
		 \begin{equation}\label{eq:chebsin}
			\begin{split}
				S(s) = \,8.73651067430188 \times 10^{-19}  s^8+0.000144829  s^7 -0.00318496  s^6 
			+0.0220637  s^5 &\\
			-0.0322337  s^4 -0.125592  s^3-0.0257364  s^2 
			 +1.00614 s-0.000238495.\qquad& 
			\end{split}
		\end{equation}

	{\it Mathematica} plot for a projection of the spun trefoil are shown in Fig. \ref{fig:Spun trefoil projection}.
	In Fig. \ref{fig:a hyperplane section}, an inside view is also given where it can be seen that the knotted arc does not deform while spinning around $\R^{2}$.
\end{example}
	\begin{figure}[H]
		\centering
		\includegraphics[width=0.65\textwidth,,height=0.25\textwidth]{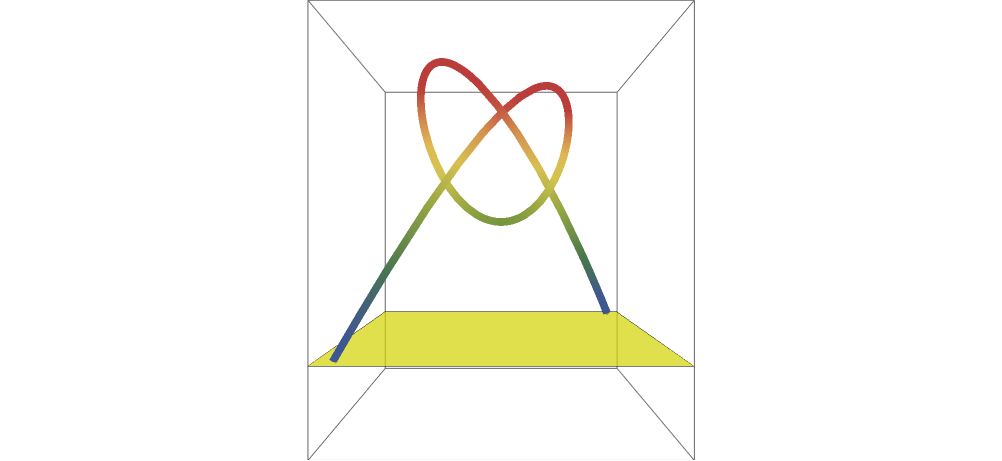}
		\caption{Knotted  arc of the long trefoil knot}
		\label{fig: long trefoil knot}
	\end{figure}
	\begin{figure}[H]
		\centering
		\begin{subfigure}[c]{0.4\linewidth}
			\centering
			\includegraphics[width=1.2\textwidth]{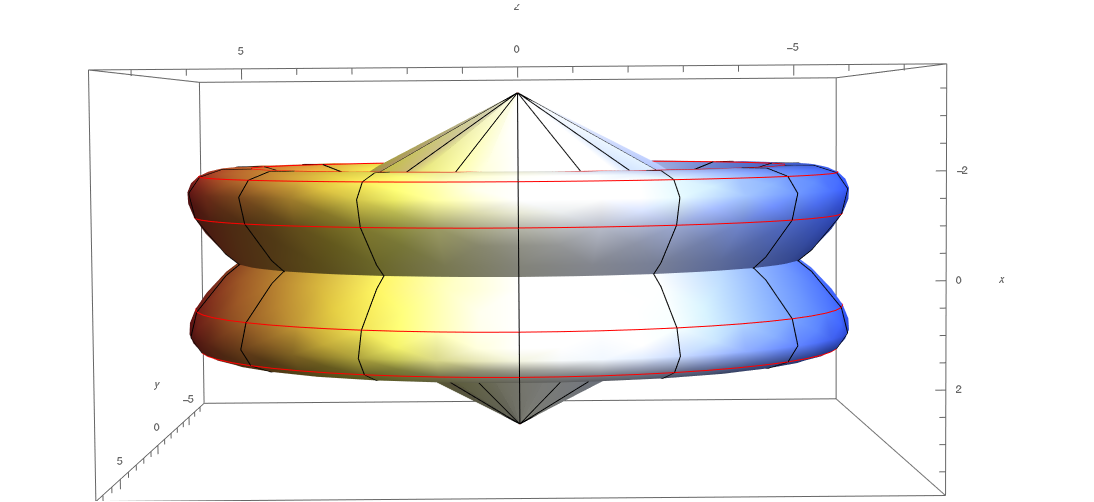}
			\caption{Projection on a hyperplane}
			\label{fig:Spun trefoil projection}
		\end{subfigure}
		\hspace{1.5cm}
		\begin{subfigure}[c]{0.4\linewidth}
			\centering
			\includegraphics[width=\textwidth]{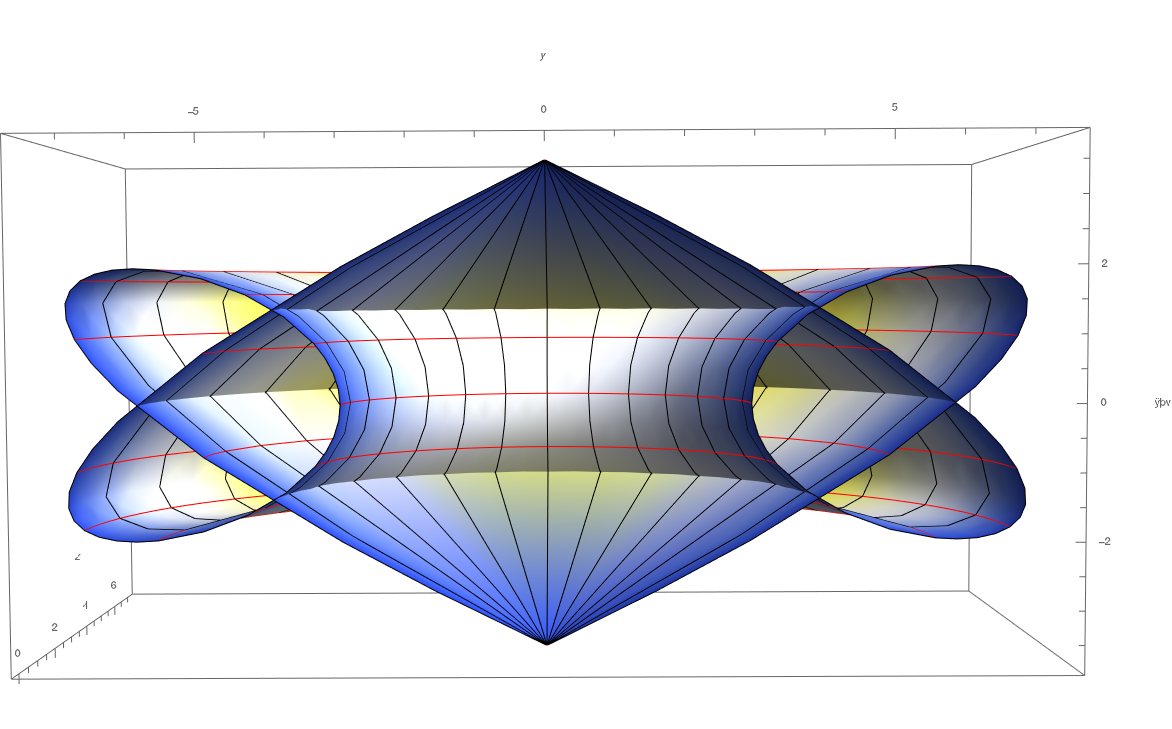}
			\caption{An inside view }
			\label{fig:a hyperplane section}
		\end{subfigure}
		\caption{The spun trefoil knot}
		\label{fig:spun trefoil}
	\end{figure}

\begin{example}[The spun figure-eight knot]
	Using the  parametrization for the long figure eight knot given in \cite{ANB} (Fig. \ref{fig: long figure eight knot}) and following the same procedure as above, we obtain a polynomial  parametrization of the spun figure eight knot given
	by $$\LC \LP f(t), g(t), h(t) \, C(s), h(t)\, S(s)\, \RP \MV t\in [-3.7934, 3.7934], s\in [0,2\pi]\RC$$
\begin{align*}
		\text{	where}\qquad f(t) &=\frac{2}{5}  \LP t^2-7 \RP    \LP t^2-10 \RP   t, \\
		g(t) &= \frac{1}{10} t  \LP t^2-4 \RP    \LP t^2-9 \RP    \LP t^2-12 \RP  , \\
		h(t) &=(20-13 t^2-t^4).
	\end{align*}

	and $C(s)$, $S(s)$ are the Chebyshev approximations of $\cos s$ (Equation~\ref{eq:chebcos}) and $\sin s$ (Equation  \ref{eq:chebsin}) respectively. Fig. \ref{fig: spun figure eight} shows the {\it Mathematica} plot of a projection of the spun figure eight knot on \textit{XZW-plane}.
\end{example}

\begin{figure}[H]
	\centering
	\includegraphics[width=0.38\textwidth]{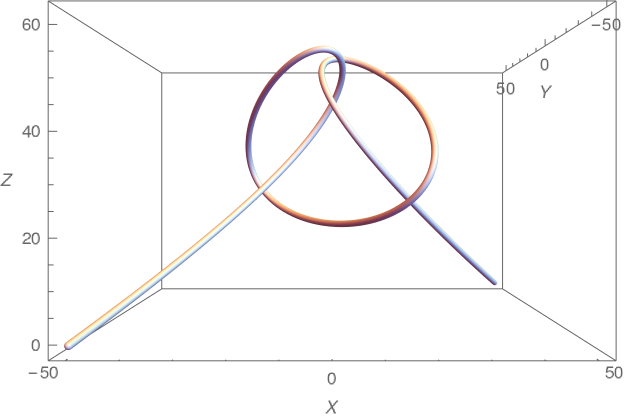}
	\caption{Knotted  arc of long figure eight knot}
	\label{fig: long figure eight knot}
\end{figure}
	\begin{figure}[H]
		\centering
		\subfloat[Projection on a hyperplane]{\includegraphics[width=0.44\textwidth]{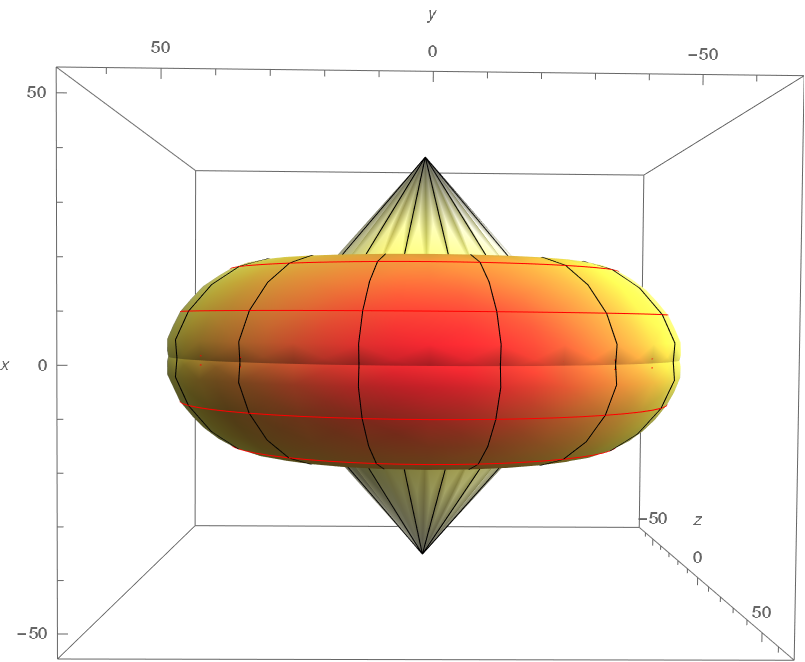}}
		\quad
		\subfloat[Inside view of the projection]{	\includegraphics[width=0.45\textwidth]{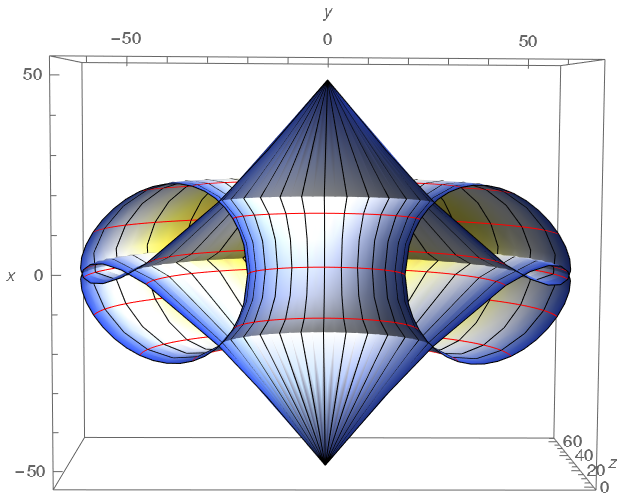}}
		\caption{The spun figure eight knot}
		\label{fig: spun figure eight}
	\end{figure}
	
	\begin{remark}
		Thus for every classical knot $K$, we will be able to construct a spun $2$-knot parametrized by polynomial functions. 
	\end{remark}

\subsection{ $d$-twist spun $2$-knots}\label{sec:twistspin}

E. C. Zeeman generalized Artin's spinning construction to {\it twist spinning} in 1965 \cite{Zee65}. In this case, we imagine the knotted part of $k$ inside a 3-ball as in Fig. \ref{fig:twistspinning}. Now, to include twisting in the spinning construction we rotate the ball $d$ times around its own axis while rotating $\R^{3}_{+}$ around $\R^{2}$ once. Position of the knotted arc after $d$ twists should match its initial position after completing the rotation around $\R^{2}$. This way we get a 2-sphere in $\R^{4}$, called {\it $d$-twist spun 2-knot}. By definition, for $d=0$ we get a spun knot.
 \begin{figure}[H]
	\begin{center}
		\includegraphics[width=0.25\linewidth]{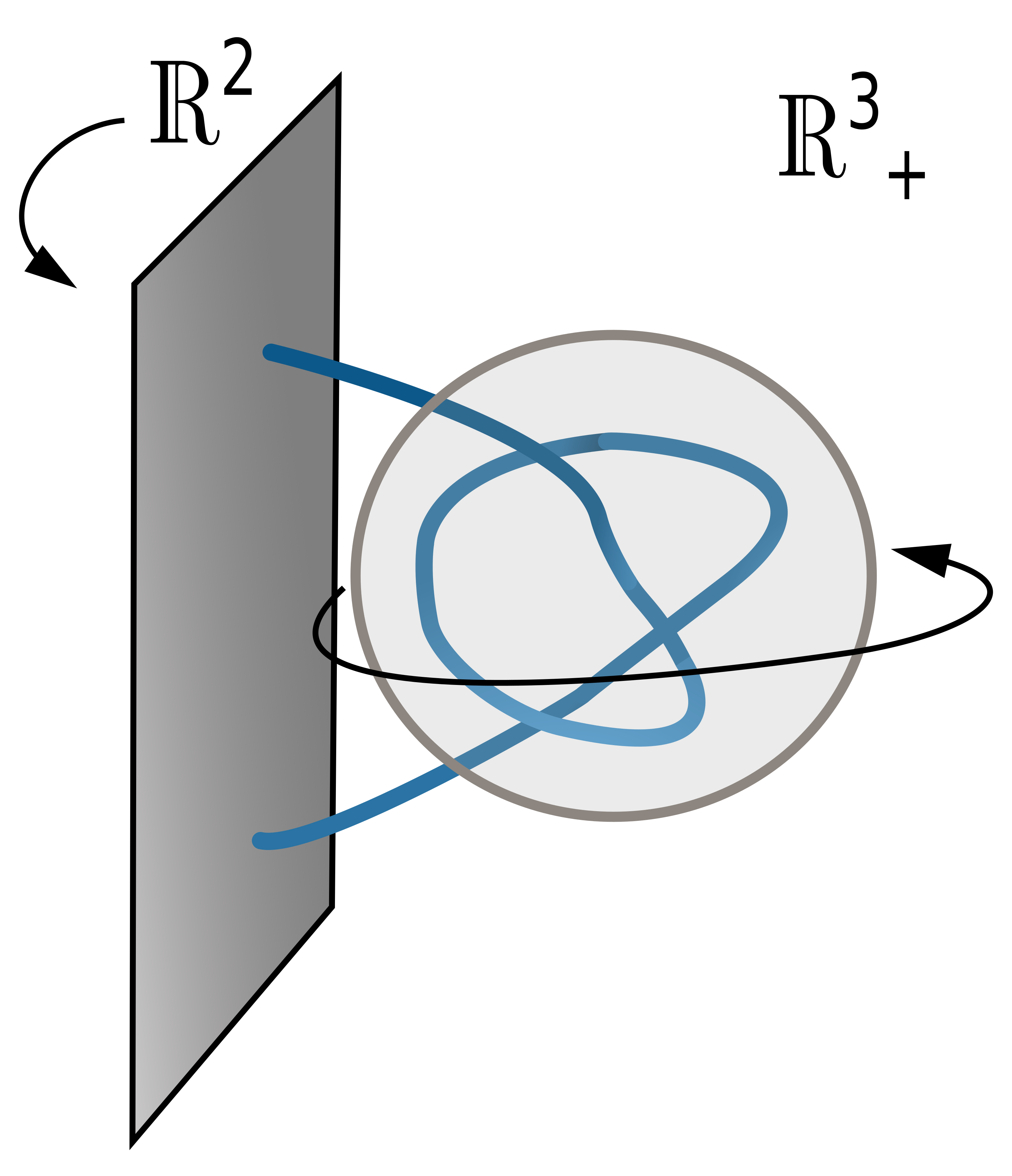}
	\end{center}
	\caption{Twist spinning}
	\label{fig:twistspinning}
\end{figure}
\begin{theorem}
	Given a classical knot $K$, there exist polynomials $f(t,\theta)$, 
	$g(t,\theta), h(t,\theta)$ and $ p(t,\theta)$ in two variables 
	$t$ and $\theta$ such that for some interval $[a,b]$, the image of  $[a,b]\times [0,2\pi]$ under the map $\phi: \R^2 \to \R^4$ defined by $$\phi (t,\theta)= \LP \bar{f}(t,\theta), \bar{g}(t,\theta), \bar{h}(t,\theta), \bar{p}(t,\theta)\RP $$ is isotopic to the $d$-twist spun of $K$. 
\end{theorem}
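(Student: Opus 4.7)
The plan is to mimic the construction in the proof of Theorem~\ref{th:spun}, inserting a twisting rotation before the spinning rotation. First, I would take a polynomial long-knot parametrization $\phi(t) = (f(t), g(t), h(t))$ of $K$ with $h(a) = h(b) = 0$ and $h > 0$ on $(a,b)$, and then further normalize it so that both endpoints lie on the $x$-axis of $\R^2 = \partial \R^3_+$, i.e.\ $g(a) = g(b) = 0$ as well. This can be arranged by pre-composing $\phi$ with an affine isometry of $\R^3$ that preserves $\R^3_+$ and carries the line through the two endpoints to the $x$-axis; such an isometry is linear, hence polynomial, preserves the knot type of $K$ and the sign of $h$, so the transformed arc remains a properly embedded long knot in $\R^3_+$.

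With the endpoints sitting on the $x$-axis, the $d$-twist spinning becomes the composition of a rotation $T_{d\theta}$ by angle $d\theta$ about the $x$-axis (acting on the $(y,z)$-plane of $\R^3$) followed by the spinning rotation $R_\theta$ of angle $\theta$ about $\R^2$ (acting on the $(z,w)$-plane of $\R^4$). Explicitly, I would define $F:[a,b]\times[0,2\pi]\to\R^4$ by
\begin{align*}
\bar f(t,\theta) &= f(t),\\
\bar g(t,\theta) &= g(t)\cos(d\theta) - h(t)\sin(d\theta),\\
\bar h(t,\theta) &= \LS g(t)\sin(d\theta) + h(t)\cos(d\theta) \RS \cos\theta,\\
\bar p(t,\theta) &= \LS g(t)\sin(d\theta) + h(t)\cos(d\theta) \RS \sin\theta.
\end{align*}
A direct check shows $F(t,0) = F(t,2\pi) = (f(t), g(t), h(t), 0)$ and that $F(a,\theta), F(b,\theta)$ are both constant in $\theta$ because $g$ and $h$ vanish at $a$ and $b$. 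Hence $F$ descends to the quotient of the cylinder that collapses its two boundary circles to points, which is topologically $S^2$, and by the construction of twist spinning recalled above this embedding realizes the $d$-twist spun of $K$ in $\R^4$.

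To turn $F$ into a polynomial map, I would replace the trigonometric factors by polynomial approximations exactly as in the proof of Theorem~\ref{th:spun}. Let $C(\theta)$ and $S(\theta)$ be the Chebyshev approximations of $\cos\theta$ and $\sin\theta$ on $[0, 2\pi]$. Using the identities $\cos(d\theta) = T_d(\cos\theta)$ and $\sin(d\theta) = U_{d-1}(\cos\theta)\,\sin\theta$, where $T_d$ and $U_{d-1}$ are the Chebyshev polynomials of the first and second kind, the expressions $T_d(C(\theta))$ and $U_{d-1}(C(\theta))\,S(\theta)$ are polynomial approximations of $\cos(d\theta)$ and $\sin(d\theta)$ on the same interval. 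Substituting them into the formulas for $\bar f, \bar g, \bar h, \bar p$ yields polynomials in $t$ and $\theta$, and a standard $C^0$-approximation argument, implicit in Section~\ref{sec:spin}, shows the resulting polynomial image is isotopic to $F([a,b]\times[0,2\pi])$, completing the proof.

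The main obstacle, I expect, is the preliminary normalization of the endpoints onto the $x$-axis. In Theorem~\ref{th:spun} this was unnecessary because $R_\theta$ fixes all of $\R^2$ pointwise; here, however, $T_{d\theta}$ fixes only its axis, so unless the endpoints lie exactly on that axis they trace out unintended circles during the twist and $F$ fails to be constant on the two boundary circles of the cylinder. Verifying that the affine transformation used to achieve the normalization preserves proper embeddedness, local flatness, and the positivity of $h$ on $(a,b)$ is the point that has to be argued carefully. A secondary, quantitative issue is that the Chebyshev approximations must be taken of sufficiently high degree, since $\cos(d\theta)$ oscillates more rapidly as $d$ grows; but for any fixed $d \geq 0$ a degree that keeps the polynomial surface embedded exists.
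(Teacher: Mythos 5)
Your overall architecture (a twist rotation composed with the spinning rotation, the substitution $\phi=d\theta$, then Chebyshev approximation, with the $T_d$/$U_{d-1}$ identities as a nice refinement) matches the paper's, but the twist itself is set up incorrectly, and this is a genuine gap rather than a presentational difference. In the twist-spinning construction the rotation by $d\theta$ must be applied only to the knotted part of the arc, inside a $3$-ball contained in the \emph{interior} of $\R^{3}_{+}$, about an axis at some height $z=c>0$; the rest of the arc, including neighbourhoods of the endpoints, is carried along rigidly by the spinning alone. You instead rotate the \emph{entire} arc about the $x$-axis, which lies in $\partial\R^{3}_{+}$. For $d\ge 1$ this pushes the arc out of the half-space: at $\theta=\pi/d$ the rotation is $(y,z)\mapsto(-y,-z)$, so the whole arc except its endpoints sits in $\{z\le 0\}$ and your radial coordinate $\rho(t,\theta)=g(t)\sin(d\theta)+h(t)\cos(d\theta)$ is negative. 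The spinning formula $\bigl(\rho\cos\theta,\rho\sin\theta\bigr)$ then places those points in the half-space fiber at angle $\theta+\pi$ rather than $\theta$, so a single fiber of the spinning contains arcs coming from two different parameter values of $\theta$; the map is no longer an embedding of the quotient cylinder in any evident way, and the image is not the $d$-twist spun knot, whose defining property is that each fiber $\{(x,y,z\cos\theta,z\sin\theta):z\ge 0\}$ meets it in exactly one copy of the (deformed) arc. Your diagnosis that the endpoint normalization is ``the main obstacle'' misses the real issue: the endpoints are fine, it is the interior of the arc that leaves $\R^{3}_{+}$.

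The missing ingredient, which is the heart of the paper's proof, is the localization of the twist. The paper takes the axis to be a segment $PQ$ parallel to the $XY$-plane at height $c>0$, joining two points of the arc that bracket all the crossings, writes the rotation about $PQ$ via Rodrigues' formula, and then interpolates between the rotated and the unrotated arc with a bump function $B(t)$ equal to $1$ on the crossing interval and $0$ near the endpoints, i.e.\ $\tilde f=B\,f'+(1-B)\,f$ and similarly for the other coordinates. This keeps the deformed arc inside $\R^{3}_{+}$ for every $\theta$ and keeps the endpoints fixed, after which the spinning and the Chebyshev replacement go through essentially as you describe. Your argument would be repaired by adopting such a localized twist; as written, the displayed map does not realize the $d$-twist spun of $K$.
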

\begin{proof}
	We start with the  parametrization $\phi: [a,b] \rightarrow \R^{3}_{+}$ of the knotted arc $k$, given by 
	\[\phi(t) \rightarrow \LP f(t), g(t), h(t)  \RP\] 
	where $h(a)=0=h(b)$ and $h(t)>0$ for $t \in (a,b)$.
	The endpoints of $k$ on the boundary $\R^{2}$ is given by $(f(a),g(a),0)$ and $(f(b),g(b),0)$. Now we find an interval $[a',b'] \subset [a,b]$ where all the crossings of $K$ lie.
	
	In the twist spinning construction, the knotted part of the arc is contained in a 3-ball in $\R^{3}_{+}$ and is rotated around the axis of the ball. To achieve this first we choose the axis of rotation as a line segment $PQ$ parallel to the $XY$ plane joining two points on the knotted arc, say $P:=(f(t_{1}),g(t_{1}),h(t_{1}))$ and $Q:=(f(t_{2}),g(t_{2}),h(t_{2}))$, where $[a',b'] \subset [t_{1},t_{2}]$ and $h(t_{1})=h(t_{2})=c$ (Fig. \ref{Rotating about $PQ$}). The value of $c$ is chosen so that the knotted part of the arc does not intersect the $XY$ plane while rotating around $PQ$. By Rodrigues' rotation formula \cite{Rod}, the rotation around $PQ$ is represented by the following matrix.
	\[\mathbf{R}'=\mathbf{T_{c}*R*T_{c}^{-1}}=\mathbf{T_{c}*R*T_{-c}},\]
	where $\mathbf{T_{c}}$ is the translation matrix along Z axis which will send $(x,y,z)$ to $(x,y,z+c)$ and $\mathbf{R}$ gives the rotation around the line $P'Q'$ on $XY$ plane, parallel to $PQ$, joining the points $P'=(f(t_{1}),g(t_{1}),0)$ and $Q'=(f(t_{2}),g(t_{2}),0)$ on the knotted arc.
	
	\begin{figure}[H]
		\centering
		\includegraphics[width=0.35\textwidth]{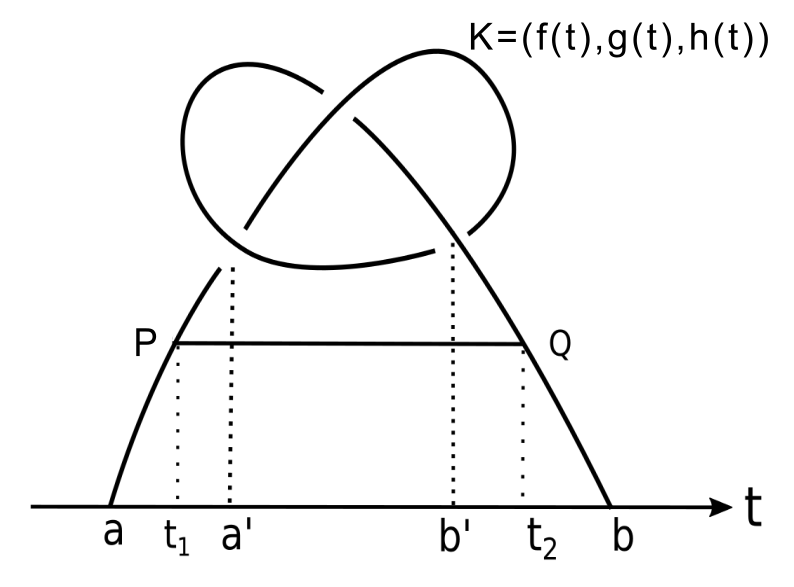}
		\caption{Rotating about $PQ$}
		\label{Rotating about $PQ$}
	\end{figure}
	The rotation matrix $\mathbf{R}$ is defined as follows:
	If $\mathbf{v}$ is a vector in $\R^{3}$ and $\mathbf{k}$ is a unit vector describing an axis of rotation around which $\mathbf{v}$ rotates by an angle $\phi$ according to the right-hand rule, the Rodrigues formula for the rotated vector $\mathbf{v_{rot}}$ is given by
	\[\mathbf{v}_{rot}=\mathbf{v}\,\cos\phi +(\mathbf{k} \times \mathbf{v})\sin\phi+\mathbf{k}\cdot(\mathbf{k}\cdot \mathbf{v})(1-\cos\phi).\]
	In this case, $\mathbf{k}$ is along the line joining $(f(t_{1}),g(t_{1}),0)$ and $(f(t_{2}),g(t_{2}),0)$.
	So,\[\mathbf{k}=\Big(\frac{f(t_{2})-f(t_{1})}{N},\frac{g(t_{2})-g(t_{1})}{N},0\Big),\]
	where $N=\sqrt{(f(t_{2})-f(t_{1}))^{2}+(g(t_{2})-g(t_{1}))^{2}}.$
	
	For simplification, let us denote 
	\[f_{21}:=f(t_{2})-f(t_{1})\] 
	\[g_{21}:=g(t_{2})-g(t_{1}).\]
%	Then\[\mathbf{k}=\Big(\frac{f_{21}}{N},\frac{g_{21}}{N},0\Big),\]
%	where $N=\sqrt{f_{21}^{2}+g_{21}^{2}}.$
	
	Then the rotation matrix through an angle $\phi$ counterclockwise around the axis $\mathbf{k}$ is
	\[\mathbf{R=I+\sin\phi \; K+(1-\cos\phi)\;K^{2}},\]
	where
	\[ \mathbf{K}=
	\begin{pmatrix}
		0 & -k_{z} & k_{y} \\
		k_{z} &0 & -k_{x} \\
		-k_{y} & k_{x} & 0
	\end{pmatrix}
	=\begin{pmatrix}
		0 & 0 &\dfrac{g_{21}}{N} \\[0.5cm]
		0 & 0 & -\dfrac{f_{21}}{N} \\[0.5cm]
		-\dfrac{g_{21}}{N} & \dfrac{f_{21}}{N} & 0
	\end{pmatrix}.\]
	and the rotation matrix around $PQ$ is given by,
	\[\mathbf{R}'=\mathbf{T_{c}*R*T_{c}^{-1}}=\mathbf{T_{c}*R*T_{-c}},\]
	where
	\[\mathbf{R}=
	\begin{pmatrix}
		\dfrac{f_{21}^{2}\;+\;g_{21}^{2}\;\cos\phi}{N^{2}} & \dfrac{f_{21}\;g_{21}\;(1-\cos\phi)}{N^{2}} & \dfrac{g_{21}\;\sin\phi}{N} & 0\\[0.5cm]
		\dfrac{f_{21}\;g_{21}\;(1-\cos\phi)}{N^{2}} &   \dfrac{f_{21}^{2}\;\cos\phi\;+\;g_{21}^{2}}{N^{2}}  & -\dfrac{f_{21}\;\sin\phi}{N}& 0\\[0.5cm]
		-\dfrac{g_{21}\;\sin\phi}{N} & \dfrac{f_{21}\;\sin\phi}{N}&\cos\phi & 0 \\[0.5cm]
		0 & 0 & 0 & 1
	\end{pmatrix}\]
	and $\mathbf{T_{c}}$ is the translation matrix along Z-axis which sends $(x,y,z)$ to $(x,y,z+c)$, given by \[T_{c}=
	\begin{pmatrix}
		1 & 0 & 0 & 0 \\
		0& 1 & 0 & 0\\
		0 & 0 & 1 & c \\
		0 & 0 & 0 & 1
	\end{pmatrix}.\]
	Then,
	\[\mathbf{R}'=
	\begin{pmatrix}
		\dfrac{f_{21}^{2}\;+\;g_{21}^{2}\;\cos\phi}{N^{2}} & \dfrac{f_{21}\;g_{21}\;(1-\cos\phi)}{N^{2}} & \dfrac{g_{21}\;\sin\phi}{N} & -\dfrac{c\;g_{21}\;\sin\phi}{N}\\[0.5cm]
		\dfrac{f_{21}\;g_{21}\;(1-\cos\phi)}{N^{2}} &   \dfrac{f_{21}^{2}\;\cos\phi\;+\;g_{21}^{2}}{N^{2}}  & -\dfrac{f_{21}\;\sin\phi}{N} & \dfrac{c\;f_{21}\;\sin\phi}{N}\\[0.5cm]
		-\dfrac{g_{21}\;\sin\phi}{N} & \dfrac{f_{21}\;\sin\phi}{N}&\cos\phi& -c\;\cos\phi+c \\[0.5cm]
		0 &0 &0 & 1 
	\end{pmatrix}.\]
	Thus, the rotation around $PQ$ is given by the  parametrization :
	\begin{equation}
		(t,\phi) \longrightarrow  \LC \LP f'(t,\phi),g'(t,\phi),h'(t,\phi) \RP\,\middle|\,
		\begin{aligned}
			a \leq &t \leq b \\
			0 \leq &\phi < 2\pi
		\end{aligned}\, \RC 
	\end{equation}
	where
	{\small \begin{align}
			f'(t,\phi)&=\frac{\LP f_{21}^{2}\;+\;g_{21}^{2}\;\cos\phi \RP\,\mathbf{f(t)}+f_{21}\;g_{21}\;(1-\cos\phi)\,\mathbf{g(t)}+N\;g_{21}\,\sin\phi\,(\mathbf{h(t)}-c)}{N^2}\\
			g'(t,\phi)&=\frac{f_{21}\;g_{21}\;(1-\cos\phi)\,\mathbf{f(t)}+\LP f_{21}^{2}\;\cos\phi\;+\;g_{21}^{2} \RP\,\mathbf{g(t)}+N\;g_{21}\,\sin\phi\,(c-\mathbf{h(t)})}{N^{2}}\\
			h'(t,\phi)&=\frac{-g_{21}\;\sin\phi\;\mathbf{f(t)}+f_{21}\sin\phi\;\mathbf{g(t)}+N
				\,\cos\phi\,\mathbf{h(t)}+N\,c\,(1-\cos\phi)}{N}
	\end{align} }
	%and $N=\sqrt{f_{21}^{2}+g_{21}^{2}}.$
	
	Now, in the twist-spinning construction, there is no rotation outside the $3$-ball that contains the knotted part. This is same as not rotating the knotted arc outside the endpoints of $PQ$. We achieve this by combining the rotation functions with a bump function.

	We have, $ [a,b]= [a,t_{1}] \cup [t_{1},a'] \cup [a',b'] \cup [b',t_{2}] \cup [t_{2},b]$ (Fig. \ref{Rotating about $PQ$}), where
	\begin{enumerate}
		\item $[a',b']$ contains all the crossings of the knot.
		\item $t_{1},t_{2}$ corresponds to the endpoints of $P$ and $Q$. So, $[a,t_{1}]$ and $[t_{2},b]$ correspond to the parts of the arc outside the axis $PQ$. 
	\end{enumerate}
		\begin{figure}[H]
		\centering
		\includegraphics[width=0.3\textwidth,height=0.2\textwidth]{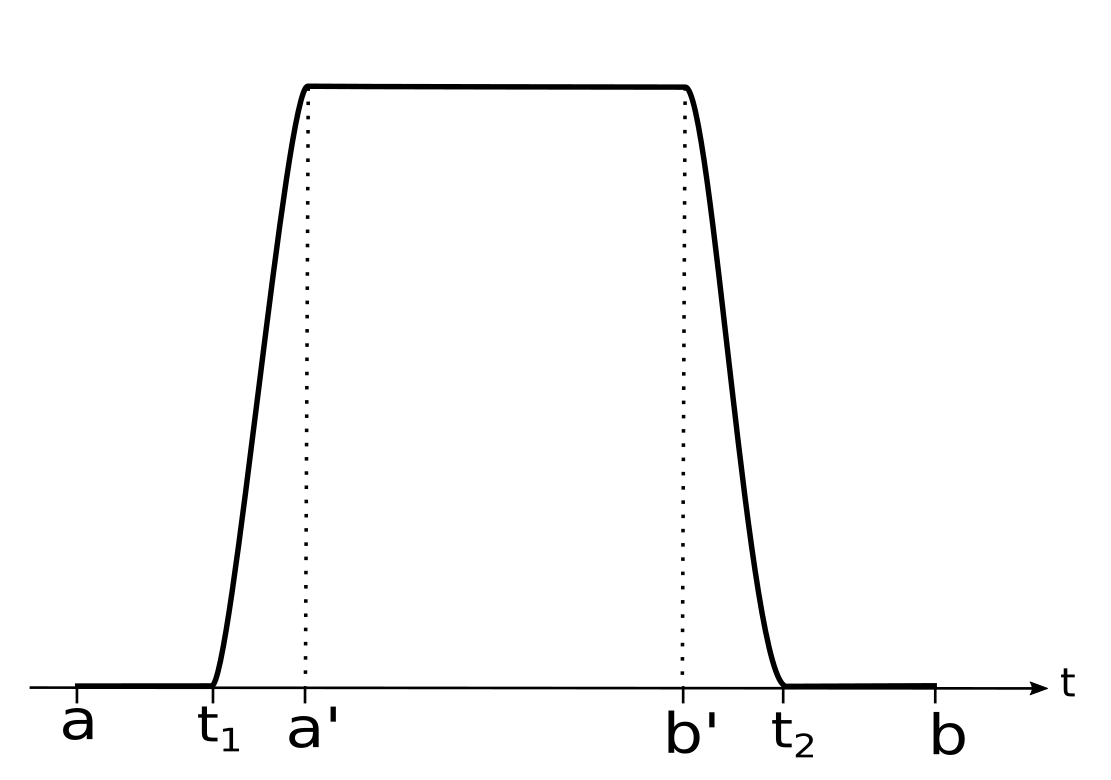}
		\caption{Bump function B(t). }
		\label{Bump function}
	\end{figure}
	Therefore, we define a bump function which takes the value $1$ in $[a',b']$ and $0$ outside $[t_{1}, t_{2}]$. First, consider the function 
	\[F(t)=\begin{cases}
		e^{-\frac{1}{t}}\,, & t > 0 \\
		0 \,, & t \leq 0.
	\end{cases}\]
	Then define the bump function, $B(t)$ as follows:
	\[B(t)=\frac{F(d_{1}-t^{2})}{F(t^{2}-d_{2})+F(d_{1}-t^{2})},\]
	where $d_{1},d_{2} \in \R^{+} $ are chosen in such a way that 
	\[B(t)=\begin{cases}
		1\,, & t \in [a',b'] \\
		0\,, & t \in [a,t_{1}] \cup [t_{2},b]\\
		\in (0,1)\,,& \text{otherwise}.
	\end{cases}.\] 
	See Fig. \ref{Bump function}.	Then, for $t \in [a,b]$, we define
	\begin{align*}
		\Tilde{f}(t,\phi)=& \,B(t)\,f'(t,\phi)+(1-B(t))\,f(t),\\
		\Tilde{g}(t,\phi)=& \,B(t)\,g'(t,\phi)+(1-B(t))\,g(t),\\
		\Tilde{h}(t,\phi)=& \,B(t)\,h'(t,\phi)+(1-B(t))\,h(t) .
	\end{align*}
	
	Now, the knotted arc is rotating $d$ times around the $PQ$ axis in $\R^{3}_{+}$ while rotating around the $XY$ plane in $\R^{4}$. Therefore, the rotation angle around $PQ$ is $d$ times the rotation angle around the $XY$ plane i.e. $\phi=d\,\theta.$
	
	Hence, the  parametrization for $d$-twist spun knot is given by 
	\begin{equation}
		(t,\theta) \longrightarrow  \LC \LP \Tilde{f}(t,d\theta)\,,\,\Tilde{g}(t,d\theta)\,,\,\Tilde{h}(t,d\theta)\,\cos\,\theta\,,\,\Tilde{h}(t,d\theta)\,\sin\,\theta  \RP \MV
		\begin{aligned}
			a \leq &t \leq b \\
			0 \leq &\theta < 2\pi
		\end{aligned}  \RC .
	\end{equation}

	Here, $\Tilde{f},\Tilde{g},\Tilde{h}$ are linear combinations of the functions $f,g,h$, cosine, sine and the bump function $B(t)$. Replacing cosine, sine and $B(t)$ with their corresponding Chebyshev polynomial approximations in $[0,2\pi]$, we obtain a polynomial  parametrization of $d$-twist spun knots given by 
	\begin{equation}
		\LC \LP \bar{f}(t,\theta)\,,\,\bar{g}(t,\theta)\,,\,\bar{h}(t,\theta)\,,\,\bar{p}(t,\theta)  \RP \MV a \leq t \leq b, \, 0 \leq \theta < 2\pi
		\RC .
	\end{equation} 
	This completes the proof.
\end{proof}

\begin{example}[$d$-twist spun trefoil]
	Start with a polynomial parametrization of the long trefoil knot (Fig. \ref{fig: trefoil with axis}), given by
		\begin{align*}
		x(t)&=t^3-3t \\
		y(t)&=t^5-10t \\
		z(t)&=-t^4+4t^2+16 .
	\end{align*}
Solving for $t_{1},t_{2}$ such that $x(t_{1})
	=x(t_{2})$ and $y(t_{1})
	=y(t_{2})$ we can get the crossing data and solving $z(t)=0$ will give the interval for the arc. Therefore, we have \begin{align*}
		[a,b]&=[-2.54404,2.54404],\\
		[a',b']&=[-1.946,1.946] ,\\
		P&=(f[-2.19],g[-2.19],h[-2.19]),\\
		Q&=(f[2.19],g[2.19],h[2.19]),\\
		c&=h[2.19].
	\end{align*}
		\begin{figure}[H]
		\centering
		\includegraphics[width=0.35\textwidth,height=0.35\textwidth]{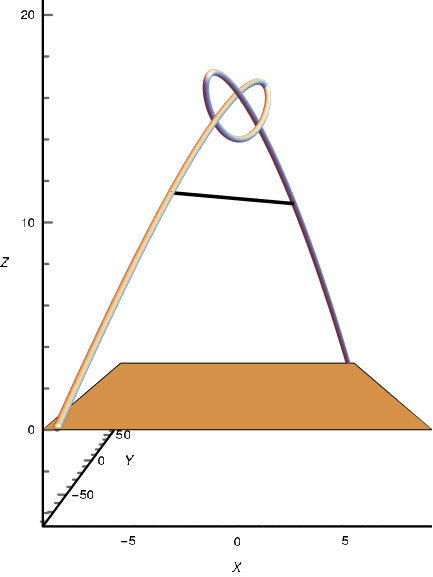}
		\caption{knotted trefoil arc with the axis of rotation}
		\label{fig: trefoil with axis}
	\end{figure}	
\noindent 
Here we define the bump function as
	\[B(t)=\frac{F(4.8-t^{2})}{F(t^{2}-3.8)+F(4.8-t^{2})}.\]
Fig. \ref{fig: Rotating about PQ} and \ref{fig:Restricted Rotation} shows how the  rotation about $PQ$ is restricted using the bump function.

		\begin{figure}[H]
				\centering
				\includegraphics[width=0.6\textwidth,height=0.2\textwidth]{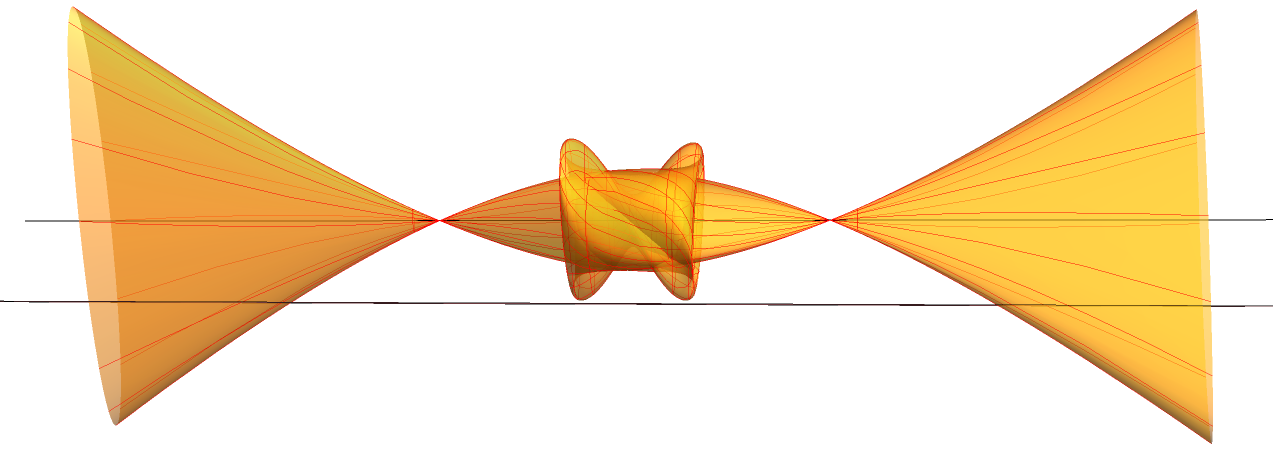}
				\caption{Rotating the long trefoil arc about $PQ$}
				\label{fig: Rotating about PQ}
			\end{figure}
	\begin{figure}[H]
		\centering
		\includegraphics[width=0.65\textwidth,height=0.3\textwidth]{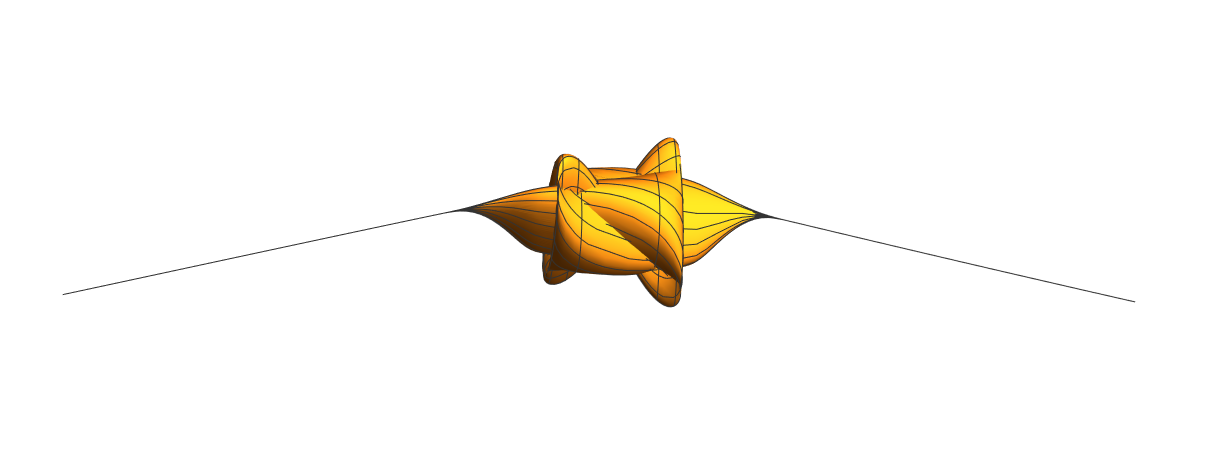}
		\caption{Restricting rotation using bump function.}
		\label{fig:Restricted Rotation}
	\end{figure}

Some of the projections of the $d$-twist spun trefoil on a hyperplane for the values $d=0,1,2,5,10,20$ are given below. In Fig. \ref{fig: zerotwist}, it is clear that $d=0$ represents Artin's spun knot. Fig. \ref{fig: zerotwistin} and Fig. \ref{fig: 1twistin} illustrate the deformation of the knotted arc while rotating around $\R^{2}$ in the twist spun knot construction, which differs from the Artin's spun knot construction. The {\it Mathematica} notebook for $d$-twist spun knot parametrization can be found in \cite{mathematica}.
\end{example}
	\begin{figure}[H]
	\centering
	\begin{subfigure}[c]{0.4\linewidth}
		\centering
		\includegraphics[width=0.8\textwidth,height=0.45\textwidth]{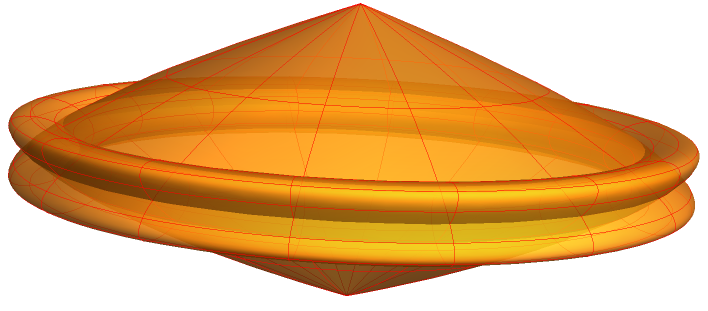}
		\caption{0-twist}
		\label{fig: zerotwist}
	\end{subfigure}
		\begin{subfigure}[c]{0.4\linewidth}
		\centering
		\includegraphics[width=0.8\textwidth,height=0.45\textwidth]{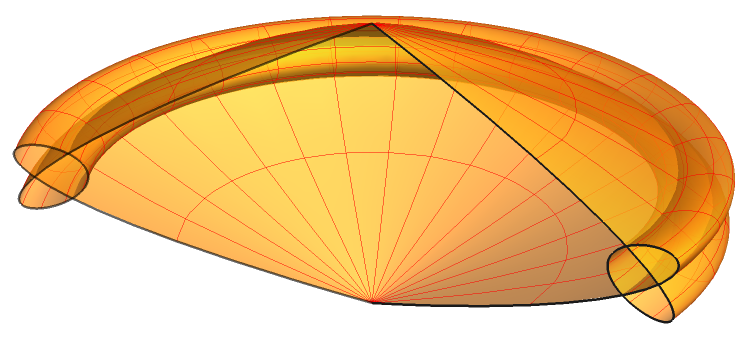}
		\caption{0-twist: inside view}
		\label{fig: zerotwistin}
	\end{subfigure}
		\caption{$d=0$}
	\end{figure}
	\begin{figure}[H]
	\centering
\begin{subfigure}[c]{0.4\linewidth}
	\centering
	\includegraphics[width=0.8\textwidth]{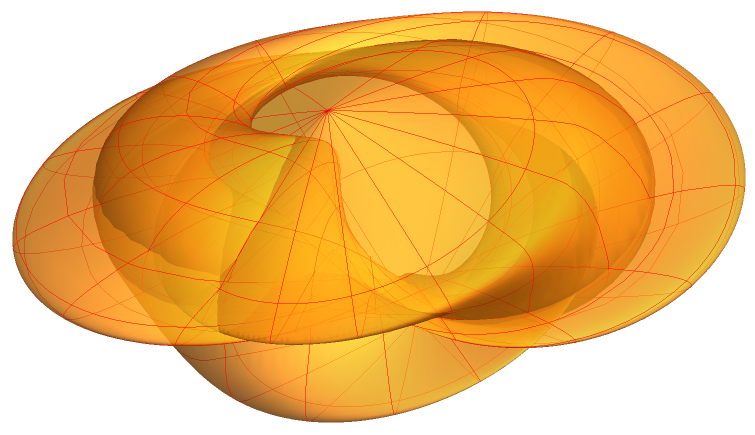}
	\caption{1-twist}
	\label{fig: 1twist}
\end{subfigure}
	\begin{subfigure}[c]{0.45\linewidth}
		\centering
		\includegraphics[width=0.8\textwidth]{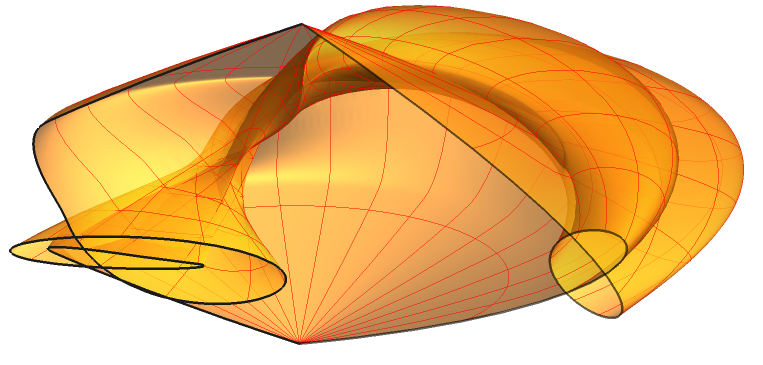}
		\caption{1-twist: inside view}
		\label{fig: 1twistin}
	\end{subfigure}
	\caption{$d=1$}
\end{figure}

%	\begin{figure}[H]
%		\centering
%		\begin{subfigure}[c]{0.4\linewidth}
%			\centering
%			\includegraphics[width=0.8\textwidth,height=0.45\textwidth]{zerotwist.png}
%			\caption{0-twist}
%			\label{fig: zerotwist}
%		\end{subfigure}
%		\begin{subfigure}[c]{0.4\linewidth}
%			\centering
%			\includegraphics[width=0.8\textwidth]{1twist.png}
%			\caption{1-twist}
%			\label{fig: 1twist}
%		\end{subfigure}
%		\newline
%		\begin{subfigure}[c]{0.4\linewidth}
%			\centering
%			\includegraphics[width=0.8\textwidth,height=0.45\textwidth]{0twist2.png}
%			\caption{0-twist: inside view}
%			\label{fig: zerotwistin}
%		\end{subfigure}
%		\begin{subfigure}[c]{0.45\linewidth}
%			\centering
%			\includegraphics[width=0.8\textwidth]{1twistin.png}
%			\caption{1-twist: inside view}
%			\label{fig: 1twistin}
%		\end{subfigure}
%		\caption{$d=0,1$}
%	\end{figure}
	
	\begin{figure}[H]
		\centering
		\begin{subfigure}[c]{0.45\linewidth}
			\centering
			\includegraphics[width=0.8\textwidth]{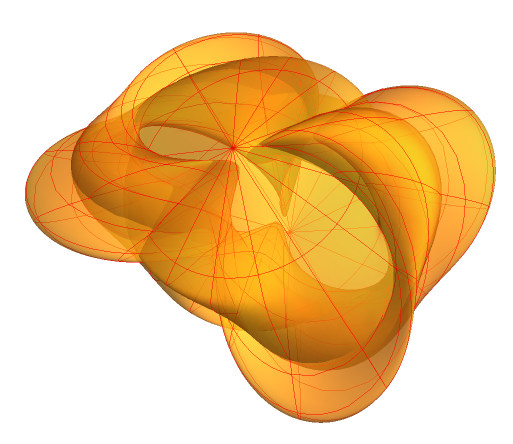}
			\caption{2-twist}
			\label{fig: 2twist}
		\end{subfigure}
		\begin{subfigure}[c]{0.45\linewidth}
			\centering
			\includegraphics[width=0.8\textwidth]{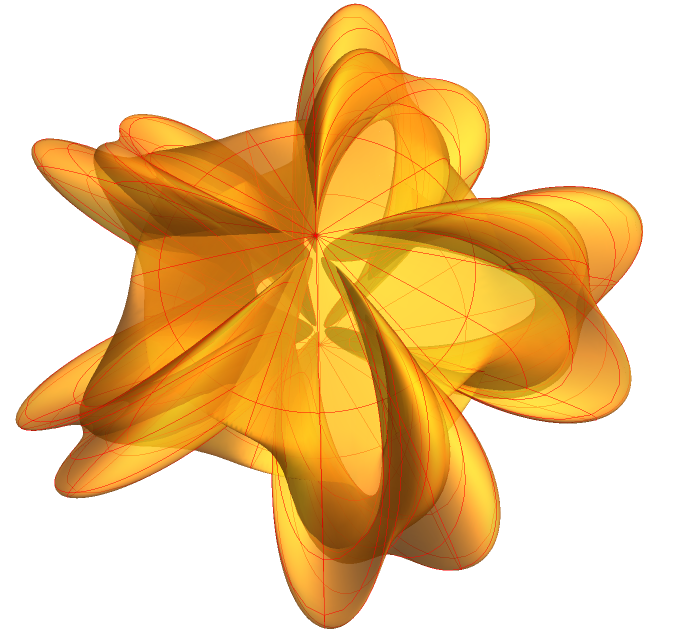}
			\caption{5-twist}
			\label{fig: 5twist}
		\end{subfigure} 
		\newline
		\begin{subfigure}[c]{0.45\linewidth}
			\centering
			\includegraphics[width=0.8\textwidth]{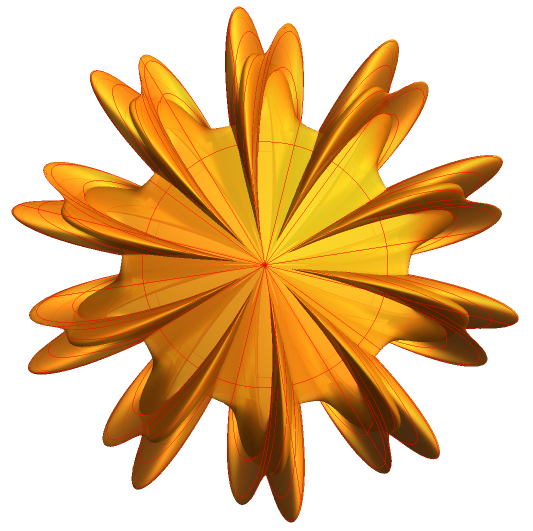}
			\caption{10-twist}
			\label{fig: 10twist}
		\end{subfigure}
		\begin{subfigure}[c]{0.45\linewidth}
			\centering
			\includegraphics[width=0.8\textwidth]{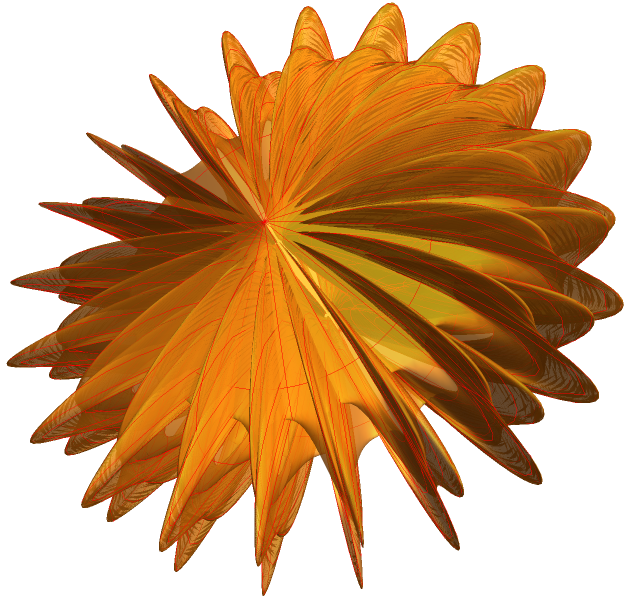}
			\caption{20-twist}
			\label{fig: 20twist}
		\end{subfigure}
		\caption{$ d=2,5,10,20.$}
	\end{figure}
	
\newpage
\section{ Parametrizing  knotted tori  } \label{sec:ribbon}
Knotted tori are ambient isotopy classes of embeddings of a torus $S^1\times S^1$ inside  $\mathbb{R}^4$.  As there can be various classes and it is very difficult to capture them in general. However, a special class of knotted tori known as {\it ribbon torus knots} (Definition \ref{def:ribbon}) are possible to parametrize due to their connection with the welded (virtual with additional moves) knots. In Section \ref{sec:weld}, we discuss briefly about welded knot theory and in Section \ref{sec:ribbonpara}, we provide a method to parametrize a ribbon torus knot.

\subsection{Welded knot theory}\label{sec:weld}

 Welded knot theory is obtained from virtual knot theory, a combinatorial generalization of classical knot theory \cite{Kau99, Sat00}. Here, a new type of crossing, called {\it virtual crossing}, is introduced in the knot diagram(Fig. \ref{fig:crossings}). A knot diagram with classical and virtual crossings is called a virtual knot diagram up to generalized Reidemeister moves. 
\begin{figure}[H]
	\centering
	\begin{subfigure}[H]{0.5\textwidth}
		\centering
		\includegraphics[width=5cm]{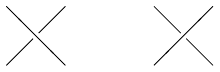}
		\caption{Classical crossings}
	\end{subfigure}
	\hspace{0.2cm}
	\begin{subfigure}[c]{0.3\textwidth}
		\centering
		\includegraphics[width=1.5cm]{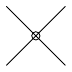}
		\caption{Virtual crossing}
	\end{subfigure}
	\caption{Two types of crossings}
	\label{fig:crossings}
\end{figure}

There are two other local moves, called {\it over} and {\it under forbidden} moves (Fig. \ref{fig:forbidden}). Allowing both on a virtual diagram results in a trivial diagram.
But allowing only the over forbidden move generates a new non-trivial knot theory, called {\it welded knot theory}. And two virtual diagrams, $K$ and $K'$, are called welded equivalent, denoted by K $\overset{w}{\sim} K'$ if they are related by a finite sequence of generalized Reidemeister moves and the over forbidden move \cite{Sat00}.  When we are in welded knot theory, we shall refer to a virtual crossing as a welded crossing. A virtual (welded) knot diagram can be thought to be obtained from a classical knot diagram in which a few crossings are converted as virtual (resp. welded) crossing.  We loosely say that these crossings have been virtualized (resp. welded). 
\begin{figure}[H]
	\centering
	\includegraphics[scale=1]{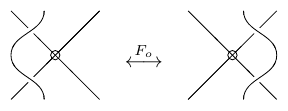}
	\qquad
	\includegraphics[scale=1]{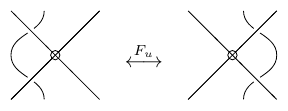}
	\caption{Forbidden moves}
	\label{fig:forbidden}
\end{figure}

S. Satoh showed a correspondence between ribbon torus knots and virtual knots by defining a map called {\it Tube} map \cite{Sat00}.
Let $K$ be a virtual knot diagram on $XY$-plane. We construct a surface diagram  corresponding to $K$ in the following way:

\begin{figure}[H]
	\centering
	\begin{subfigure}[H]{0.4\textwidth}
		\centering
		\includegraphics[width=0.8\textwidth]{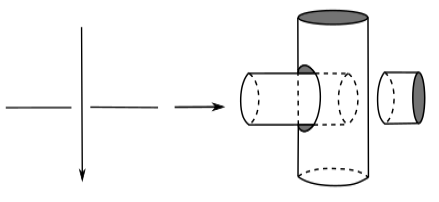}
		\caption{Near the classical crossing}
		\label{fig:tube at classical}
	\end{subfigure}
	\quad
	\begin{subfigure}[H]{0.4\textwidth}
		\centering
		\includegraphics[width=0.8\textwidth]{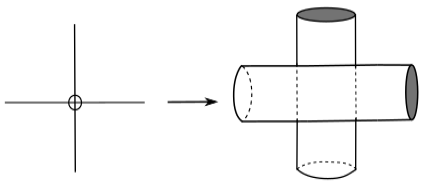}
		\caption{Near the virtual crossing}
		\label{fig:tube at virtual}
	\end{subfigure}
	\caption{Tube map near the crossings}
\end{figure}
\begin{itemize}[topsep=0pt]
	\item  At every point of the diagram, we will put a circle in the $Z$ direction.
	\item At a classical crossing, the tube corresponding to the under arc will penetrate through the tube corresponding to the over-arc (Fig. \ref{fig:tube at classical}).
	\item At the virtual crossings, the tubes are free from the over/under information of the diagram in $\R^{3}$, and they can move independently (Fig. \ref{fig:tube at virtual}).
\end{itemize}
So, $\text{Tube(K)}$ is defined as the ribbon torus knot with the surface diagram described above for $K$. He also proved the following.

\begin{proposition}\cite{Sat00}
	$ K \overset{w}{\sim} K' \implies Tube(K) \cong Tube(K').$ 
\end{proposition}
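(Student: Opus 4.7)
The plan is to verify directly that each generating move of the welded equivalence relation induces an ambient isotopy of the corresponding ribbon torus knots in $\R^{4}$. That is, we treat the statement as a move-by-move check: for each local diagrammatic move relating $K$ to $K'$, we exhibit an isotopy of $\R^{4}$ supported near the affected region that carries $\text{Tube}(K)$ onto $\text{Tube}(K')$. Since welded equivalence is generated by the classical Reidemeister moves R1, R2, R3, the virtual Reidemeister moves V1, V2, V3, the mixed classical-virtual move, and the over forbidden move, verifying invariance under each of these suffices.

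I would begin with the virtual moves, as they are the easiest. Recall from the description of the Tube map that at a virtual crossing the two tubes are not linked in $\R^{4}$ at all; they merely appear to cross when projected, but the extra dimension lets them pass by one another freely. Hence each of V1, V2, V3 and the mixed move corresponds to an isotopy that simply re-routes one tube past another in the $Z$-direction (or the fourth coordinate), leaving the combinatorial structure of ribbon singularities unchanged. The classical Reidemeister moves R1, R2, R3 come next: here the tube corresponding to the under-arc genuinely pierces the tube corresponding to the over-arc, producing ribbon singularities as in Definition~\ref{def:ribbon}, and each classical move corresponds to a well-known local isotopy of the punctured tubes. For R1 one cancels a tube self-intersection by pulling the small loop off to the side; for R2 one slides the under-tube back out through the over-tube along the band; for R3 one performs the standard 3-dimensional picture lifted to 4-space.

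The main obstacle, and the step where the distinction between welded and virtual knot theory really enters, is the over forbidden move. Here an arc carrying a classical over-crossing is allowed to pass through a virtual crossing. On the surface side, this asks us to slide the over-tube (which pierces some under-tube producing a ribbon singularity) past a tube that it meets only virtually. The key observation is that the virtually-crossed tube lies in a disjoint portion of $\R^{4}$ (the two tubes are unlinked at a virtual crossing), so no linking obstruction prevents sliding the piercing configuration along the over-tube across the virtual crossing. Concretely, one parametrizes the homotopy that moves the ribbon singularity (a meridional disk in the pierced tube) along the over-tube through the region of the virtual crossing; because the other tube can be pushed out of the way by an isotopy in the independent direction, the homotopy is in fact an ambient isotopy. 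One then checks that the under forbidden move cannot be similarly realized, which is consistent with the fact that the Tube map is known \emph{not} to descend from virtual knots without the over move being quotiented.

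Finally, I would remark that the isotopies constructed in each case are supported in a small neighborhood of the modified region and fix the rest of the diagram, so they compose along a finite sequence of welded moves to produce a global ambient isotopy of $\R^{4}$ taking $\text{Tube}(K)$ to $\text{Tube}(K')$. This yields the required equivalence $\text{Tube}(K)\cong\text{Tube}(K')$ and completes the proof.
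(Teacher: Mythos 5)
The paper itself offers no proof of this proposition: it is quoted from Satoh's work \citep{Sat00}, so the only fair comparison is with the argument there, and your move-by-move strategy is exactly that argument --- check that each generator of welded equivalence (R1--R3, the virtual moves, the mixed move, and the over forbidden move) induces an equivalence of the tubes, with the virtual moves trivial because virtually-crossing tubes are unlinked in $\R^4$, and the over forbidden move as the only delicate case. Your outline is sound, but two points deserve attention. First, you have the piercing convention backwards: in the Tube construction as described in Section 4.1 (and in Satoh's paper), the tube of the \emph{under}-arc passes through the interior of the tube of the \emph{over}-arc, not the reverse. This is not cosmetic, because it is precisely what separates the two forbidden moves: in the over forbidden move the moving strand is the over-strand, so its tube is the enveloping one, and the two tubes threading through its interior meet each other only at a virtual crossing, hence are mutually unlinked there and can be pushed apart while the enveloping tube slides across; with your reversed convention the same reasoning would appear to justify the under forbidden move, which is false. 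Second, the key sentence ``the homotopy is in fact an ambient isotopy'' is asserted rather than established; Satoh makes this rigorous not by writing explicit isotopies of $\R^4$ but by working with broken surface diagrams and verifying that each welded move translates into a finite sequence of Roseman moves on the diagrams of the tubes. As a sketch your proposal captures the right structure, but the forbidden-move step needs the corrected convention and a precise mechanism (Roseman moves or an explicit local isotopy) to count as a proof.
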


\subsection{ Parametrizing a ribbon torus knot}\label{sec:ribbonpara}

As discussed in Section \ref{sec:weld}, every ribbon torus knot is isotopic to  tube of some welded knot.  Welded knots can be created with the help of classical knots by welding few crossings. Thus from one classical knot one can obtain various non-isotopic welded knots and hence by performing tube operation, we can obtain ribbon torus knots. In Section \ref{sec:weld}, we have seen that at a classical crossing, the tube corresponding to the under-arc will penetrate through the tube corresponding to the over-arc and at a virtual crossing the tubes are free from the over/under information of the diagram in $\R^{3}$, and they can move independently. Below, we describe how to achieve this in parametrization. We prove the following theorem.

\begin{theorem}
	Given a classical knot $K$ with given trigonometric  parametrization $\phi': [0,2\pi] \rightarrow \R^3$ given by
	\[\phi'(t)=\LP f(t),g(t),h(t)\RP ,\]
	there exists functions $S_{c}(t)$ and $S_{w}(t)$ such that 
	the image of $[0,2\pi]\times [0,2\pi]$ under the map $\tilde{\Psi}: \R^2 \to \R^4$ defined by
	\[\tilde{\Psi}(t,s)=\LP f(t),\,(r-d_{1}S_{c}(t))\,g(t)+\cos s,\,(r-d_{1}S_{c}(t))\,g(t)+\sin s+d_{2}S_{w}(t),\, h(t)\RP,\]
	where $r, d_{1}, d_{2} \in \R^{+}$, is isotopic to the $Tube(K)$.
\end{theorem}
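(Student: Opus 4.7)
The plan is to construct the switching functions $S_c(t)$ and $S_w(t)$ as smooth bumps supported near the parameter values of classical and welded crossings of the diagram, and then verify that the resulting map $\tilde\Psi$ embeds a torus in $\R^4$ isotopic to $\mathrm{Tube}(K)$. First I would enumerate the finitely many pairs $(t_i,t_i')$ with $(f(t_i),g(t_i))=(f(t_i'),g(t_i'))$ arising from the welded diagram of $K$, separate them into classical pairs and welded pairs, and at each classical pair use the $h$-coordinate to single out the under-arc parameter $t_i^c$. I would then define $S_c$ to equal $1$ on disjoint short intervals around each $t_i^c$ and to vanish outside slightly larger neighborhoods, and define $S_w$ analogously supported near one parameter value of each welded pair.

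The central verification is injectivity of $\tilde\Psi$; once this is established, because $\phi'$ closes up and $(\cos s,\sin s)$ is $2\pi$-periodic, the image is automatically a smoothly embedded torus. Away from the crossings, distinct knot points have distinct $(f,g)$-values, so the $s$-circles at different base points sit over different centers in the $(y_1,y_2)$-plane and are disjoint. At a classical crossing the two strands agree in $(f,g)$ but differ in the fourth coordinate $h$, so the two $s$-circles already lie in different hyperplanes $\{w=\mathrm{const}\}$; additionally the modulation $d_1 S_c(t)\,g(t)$ shifts the under-arc's center diagonally in the $(y_1,y_2)$-plane, so that choosing $d_1\,|g(t_i^c)|>\sqrt{2}$ (with $S_w$ vanishing near $t_i^c$) matches the transverse penetrating configuration of Satoh's local model (Fig.~\ref{fig:tube at classical}). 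At a welded crossing the $d_2 S_w(t)$ term rigidly shifts one tube by $d_2$ in the $y_2$-direction, and for $d_2>2$ the two tubes are disjoint and decoupled as in Fig.~\ref{fig:tube at virtual}. Since the bump supports are pairwise disjoint, no new global self-intersections are introduced.

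The isotopy to $\mathrm{Tube}(K)$ then follows by patching local ambient isotopies supported in small neighborhoods of each crossing---matching the image of $\tilde\Psi$ near each crossing to Satoh's local model---together with the standard tubular neighborhood of $\phi'$ on the complementary arcs. The main obstacle I anticipate is the simultaneous control of the bump supports: they must be narrow enough that the localized deformations at different crossings do not interfere, yet wide enough that the under-arc's tube fully threads through a meridian disk of the over-arc's tube (and similarly that welded tubes fully decouple) along a complete meridian circle rather than only grazing it. A compactness argument on the finite set of crossings, together with uniform lower bounds on the pairwise parameter distances and on $|g(t_i^c)|$, allows a simultaneous choice of the bumps and of the constants $d_1,d_2$ that makes the construction go through.
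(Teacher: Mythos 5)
Your overall skeleton matches the paper's: bump functions with pairwise disjoint supports centered at the crossing parameters, a sum of bumps for $S_c$ at the under-crossing parameters and bumps at the welded-crossing parameters for $S_w$ (the paper uses an alternating sum so the two welded strands are pushed in opposite directions, but that difference is cosmetic), followed by a local verification at each crossing and patching along the crossing-free arcs. Your treatment of the welded crossings is also the paper's: the $d_2 S_w$ term separates the two tubes in the projection, which is exactly Satoh's local model of Fig.~\ref{fig:tube at virtual}.

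The gap is in your treatment of the classical crossings. You use $S_c$ to shift the under-arc's tube until the two $s$-circles are \emph{disjoint} in the $(y_1,y_2)$-plane (your condition $d_1|g(t_i^c)|>\sqrt{2}$), and assert that this ``matches the transverse penetrating configuration.'' It does not: two disjoint tubes over a crossing is precisely the \emph{virtual} local model of Fig.~\ref{fig:tube at virtual}. In $\mathrm{Tube}(K)$ at a classical crossing the under-arc's tube must pass \emph{through the interior} of the over-arc's tube --- the two tubes are required to intersect in the $\R^3$-projection (in meridional ribbon singularities), and it is the fourth coordinate $h(t)$ that resolves these intersections into an embedding in $\R^4$. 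This is why the paper's $S_c$ is described as \emph{shrinking} the under tube (so that its meridian circle lies inside the over tube's meridian disk in the projection), not displacing it out of the way; see Fig.~\ref{fig:tube at classical}. The distinction is not a local technicality: replacing the penetrating model by the disjoint model at every classical crossing produces the tube of the fully virtualized diagram, which is an unknotted torus independently of $K$, so your surface would not be isotopic to $\mathrm{Tube}(K)$ for any nontrivial $K$. Your final patching step then also fails, since the local pieces you construct at classical crossings cannot be matched to Satoh's classical local model by ambient isotopies supported near the crossings. To repair the argument you must arrange that near each under-crossing parameter the under tube's projected meridian circle is contained in the over tube's projected meridian disk, verify that the only self-intersections of the $\R^3$-projection are the resulting ribbon singularities, and then use $h(t_i)\neq h(s_i)$ to separate the two sheets in the fourth coordinate.
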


\begin{proof}
	Take the projection of $K$ on $XY$ plane given by 
	$\{f(t),g(t) \, \vert \, t \in[0,2\pi]\}$
	and let there be $n$ number of classical crossings, denoted by $c_{1},c_{2},\cdots,c_{n}$, and $m$ number of welded crossings, denoted by $w_{1},w_{2},\cdots,w_{m}$. Additionally, let $\{t_{i},s_{i}\}$  be the pair of values of $t$ where $t_{i}$ corresponds to the overcrossing point and $s_{i}$ corresponds to the undercrossing point of $c_{i}$ respectively for $i=1,\cdots, n$. Also, let $\{\bar{t}_{j},\bar{s}_{j}\}$  be the pair of values of $t$ where $\bar{t}_{j}$ corresponds the first encounter of the crossing $w_{j}$ and $\bar{s}_{j}$ corresponds the second encounter of the crossing $w_{j}$ for $j=1,\cdots, m$. \\
	First, putting a circle with a fixed radius $r \in \R_{+}$ at each point of the projection is given by
	\begin{equation*}
		\left\{\big(f(t),\,r \,g(t)+\cos s,\,r\, g(t)+\sin s \big)\, \middle| \, t,\,s \in[0,2\pi]\right\}.
	\end{equation*}
	
	Now, we construct functions for shrinking the radius of the lower tube and displacing the tubes from each other, denoted by $S_{c}(t)$ and $S_{w}(t)$ respectively.\\
	Take intervals of the same length, say $L$, around each $t_{i}$ and $s_{i}$ such that no two intervals intersect. Now, we define the bump function with centre $c$ and width $w$ (Fig. \ref{Fig:bump}) by
	\[B(x,c,w)=
	\begin{cases}
		\exp\left(-\frac{w^2}{w^2-(x-c)^2}\right) & \text{if}\; | x-c| <w \\
		0 & \text{otherwise.} 
	\end{cases}\]
		\begin{figure}[H]
		\centering 
		\includegraphics[width=0.35\textwidth]{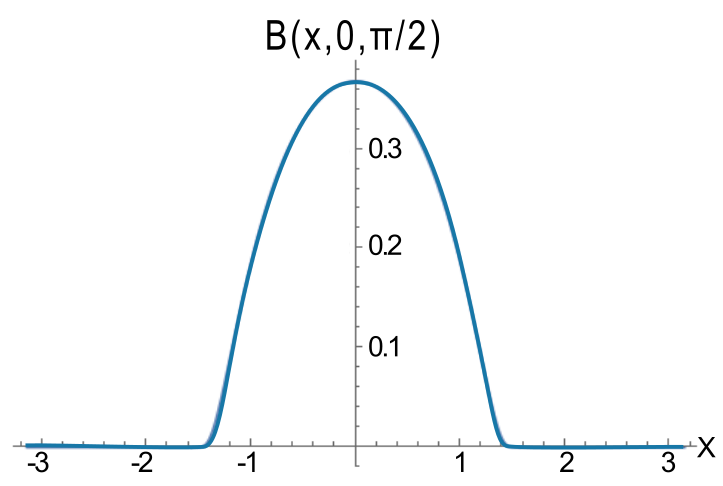}
		\caption{The bump function $B(x,c,w)$ with center $c=0$ and width $w= \frac{\pi}{2}$}
		\label{Fig:bump}
	\end{figure}
	The function $S_{c}(t)$ is defined by summing the functions $B(x,c,\frac{L}{2})$ with centres at the undercrossing points $s_{i}$ of all the classical crossings. On the other hand, the function $S_{w}(t)$ is defined by taking an alternating sum of $B(x,c,\frac{L}{2})$ with centres at the welded crossing points $\bar{t}_{j}$ and  $\bar{s}_{j}$ of all welded crossings. Thus,
		\begin{align*}
			S_{c}(x)=& \sum_{i=1}^{n} B(x,s_{i},\frac{L}{2}),\\
			S_{w}(x)=& \sum_{j=1}^{m} \LP B(x,\bar{t}_{j},\frac{L}{2})-B(x,\bar{s}_{j},\frac{L}{2})\RP.
		\end{align*}

	\noindent
	The  parametrization of the ribbon torus $Tube(K)$ is given by $\tilde{\Psi}: [0,2\pi]\times [0,2\pi] \rightarrow \R^4$ which is defined by
	\[\tilde{\Psi}(t,s)=\LP f(t),\,(r-d_{c}S_{c}(t))\,g(t)+\cos(s),\,(r-d_{c}S_{c}(t))\,g(t)+\sin(s)+d_{w}S_{w}(t),\, h(t)\RP,\]
	where $r, d_{c}, d_{w} \in \R_{+} $ are the radius of the tube, shrinking factor and the displacement factor, respectively, and $h(t)$ provides the over/under information of the tubes in $\R^{4}$. This completes the proof. 
\end{proof}

Replacing all cosine, sine and bump function with their corresponding Chebyshev approximations, we can get a polynomial  parametrization. {\it Mathematica} plots of the $Tube(K)$ near the classical and welded crossings are shown in Fig. \ref{fig:classical tube} and Fig. \ref{fig:classical tube}, respectively.
	\begin{figure}[H]
	\centering
	\begin{subfigure}[H]{0.3\linewidth}
		\centering
		\includegraphics[width=0.75\textwidth]{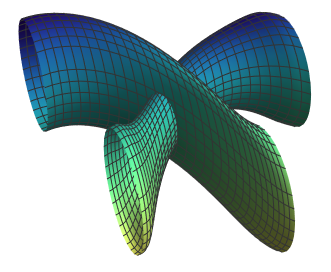}
		\caption{Near classical crossing: lower tube is shrunk}
		\label{fig:classical tube}
	\end{subfigure}
	\qquad \qquad
	\begin{subfigure}[H]{0.3\linewidth}
		\centering
		\includegraphics[width=0.65\textwidth]{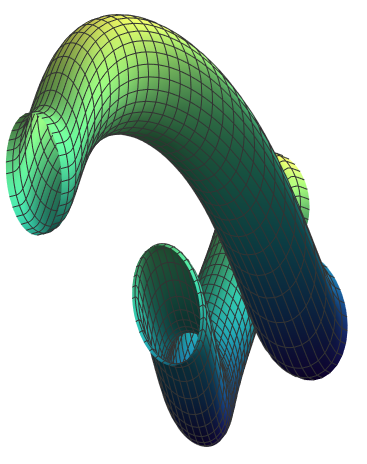}
		\caption{Near welded crossing: upper and lower tubes are displaced}
		\label{fig:welded tube}
	\end{subfigure}
	\caption{Mathematica plot of $Tube(K)$ near the crossings}
	\label{}
\end{figure}

\subsection{Example: A ribbon torus knot arising from $T(2,7)$}
In this example, we provide explicit  parametrization of the Tube of the classical torus knot $T(2,7)$ (Fig. \ref{Fig:torusknot}) with two welded crossings with one gap (Fig. \ref{Fig:weldedtorus}). Let's denote this welded knot as $K_{2,1}$.

\begin{figure}[H]
	\centering
	\begin{subfigure}[c]{0.3\linewidth}
		\centering
		\includegraphics[width=0.6\textwidth]{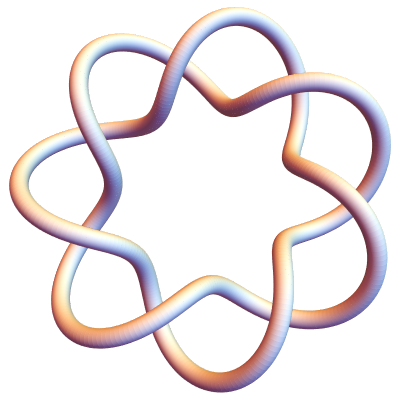}
		\caption{Classical $T(2,7)$}
		\label{Fig:torusknot}
	\end{subfigure}
	\qquad
	\begin{subfigure}[c]{0.3\linewidth}
		\centering
		\includegraphics[width=0.6\textwidth]{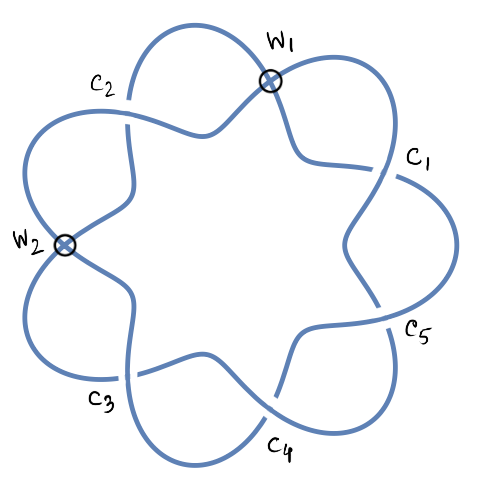}
		\caption{Welded knot $K_{2,1}$}
		\label{Fig:weldedtorus}
	\end{subfigure}
	\caption{Welded knot obtained from $T(2,7)$}
	\label{}
\end{figure}
\noindent 
The trigonometric parametrization of $T(2,7)$ is given by $	\phi'(t)=\LP f(t),g(t),h(t)\RP$, where
\begin{align*}
    f(t)&=\cos (2 t) (\cos (7 t)+3)\\
	g(t) &= \sin (2 t) (\cos (7 t)+3)\\
	h(t) &= \sin (7t).
\end{align*}

The interval pairs $[t-\frac{L}{2}, t+\frac{L}{2}]$ of equal length $L=\frac{\pi}{7}$ containing the five classical and two welded crossings with over/under information are given in table below.
\begin{table}[H]
	\centering
	\begin{TAB}(r,0.2cm,0.4cm)[3pt]{|c|c|c|c|c|c|c|c|}{|c|c|c|c|c|c|c|c|}
		\centering
		$[0,\pi]$ & $c_{1}$ & $w_{1}$ & $c_{2}$ & $w_{2}$ & $c_{3}$ & $c_{4}$ & $c_{5}$  \\
		
		over  & $\big[0,\frac{ \pi}{7}\big]$ & & $\big[\frac{2\pi}{7},\frac{3\pi}{7}\big]$ & &  $\big[\frac{4\pi}{7},\frac{5\pi}{7}\big]$ &  & $\big[\frac{6\pi}{7},\pi\big]$  \\
		
		weld & & $\big[\frac{\pi}{7},\frac{2\pi}{7}\big]$ & & $\big[\frac{3\pi}{7},\frac{4\pi}{7}\big]$ & & &  \\
		
		under &  & & & & & $\big[\frac{5\pi}{7},\frac{6\pi}{7}\big]$ & \\
		
		$[\pi, 2\pi]$ & $c_{1}$ & $w_{1}$ & $c_{2}$ & $w_{2}$ & $c_{3}$ & $c_{4}$ & $c_{5}$  \\
		
		over  &  & &  & &   &$\big[\frac{12\pi}{7},\frac{13\pi}{7}\big]$ &   \\
		
		weld & & $\big[\frac{8\pi}{7},\frac{9\pi}{7}\big]$ & & $\big[\frac{10\pi}{7},\frac{11\pi}{7}\big]$ & & &  \\
		
		under & $\big[\pi,\frac{8\pi}{7}\big]$ & &$\big[\frac{9\pi}{7},\frac{10\pi}{7}\big]$ & & $\big[\frac{11\pi}{7},\frac{12\pi}{7}\big]$&  &  $\big[\frac{13\pi}{7},2\pi \big]$
	\end{TAB}
%	\caption{Crossing data of $K_{2,1}$: $[t-\frac{L}{2}, t+\frac{L}{2}]$}
%	\label{torusknot}
\end{table}
 In this case, we can write 
\begin{equation*}
	\resizebox{\textwidth}{!}{$S_{c}(t)= B\LP t,\frac{11\pi}{14},\frac{\pi}{14}\RP+B\LP t,\frac{15\pi}{14},\frac{\pi}{14}\RP+B \LP t,\frac{19\pi}{14},\frac{\pi}{14}\RP+B\LP t,\frac{23\pi}{14},\frac{\pi}{14}\RP+B\LP t,\frac{27\pi}{14},\frac{\pi}{14}\RP,$}
\end{equation*}	
\begin{equation*}
	\resizebox{0.8\textwidth}{!}{$ S_{w}(t)= \LP B\LP t,\frac{3\pi}{14},\frac{\pi}{14}\RP+B\LP t,\frac{7\pi}{14},\frac{\pi}{14}\RP\RP-\LP B\LP t,\frac{17\pi}{14},\frac{\pi}{14}\RP+B\LP t,\frac{21\pi}{14},\frac{\pi}{14}\RP \RP. $}
\end{equation*}

Choosing $r=0.7, \, d_{c}=1,\, d_{w}=5 $, the parametrization of the $\text{Tube}(K_{2,1})$ is given by $\tilde{\Psi}: [0,2\pi]\times [0,2\pi] \rightarrow \R^4$ which is defined by
\begin{equation*}
\resizebox{\textwidth}{!}{$ \tilde{\Psi}(t,s)=\LP f(t),(0.7-S_{c}(t))g(t)+\cos(s),(0.7-S_{c}(t))g(t)+\sin(s)+5 S_{w}(t),h(t)\RP$}
\end{equation*}
where $ t \in [0,2\pi],s \in [0,2\pi]$. {\it Mathematica} plot of the projection of $\text{Tube}(K_{2,1})$ in $XYZ$ hyperspace is shown in Fig. \ref{Fig:tubeoftorus}.

\begin{figure}[H]
\centering 
\includegraphics[width=0.4\textwidth]{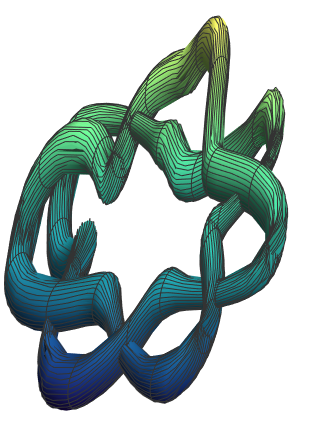}
\caption{Projection of $\text{Tube}(K_{2,1})$ in $XYZ$ hyperspace}
\label{Fig:tubeoftorus}
\end{figure}

\section{Parametrizing knotted planes}\label{sec:plane}

 In this section, we develop methods for constructing and  parametrizing knotted planes, specifically, long $2$-knots using polynomial embeddings. A \emph{long $2$-knot} refers to a proper, smooth embedding of $\R^2$ into $\R^4$, which becomes standard outside a compact region. We begin by introducing polynomial $2$-knots and prove that every such knot can be continuously deformed into a trivial surface knot via a P-homotopy. We then extend the notion of singularity index from long knots to long $2$-knots and establish that every long knot gives rise to a simple long $2$-knot. Finally, we present two geometric constructions of knotted planes via spinning, one through embedded knotted discs and the other by rotating a knotted arc with one end at infinity.
 \begin{definition}
 	A long $2$-knot is called a polynomial $2$-knot if  it is an embedding $\phi : \R^{2} \rightarrow \R^{4}$ defined by
 	\[ \phi(t,s) = (f (t,s), g(t,s), h(t,s), p(t,s)),\]
 	where $f (t,s), g(t,s), h(t,s)$ and $p(t,s)$ are real polynomials.
 \end{definition}
 \begin{theorem}\label{th:homotopy}
 	For every polynomial $2$-knot $F$, there exists a  P-homotopy that transforms $F$  to a trivial surface knot.
 \end{theorem}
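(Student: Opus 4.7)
The plan is to construct an explicit polynomial straight-line homotopy between $F$ and the standard embedding of the plane. Writing $F(t,s) = (f(t,s), g(t,s), h(t,s), p(t,s))$ with each component a real polynomial, I would define
\[\Phi(t,s,u) = (1-u)\,F(t,s) + u\,(t,\,s,\,0,\,0) = \bigl((1-u)f+ut,\ (1-u)g+us,\ (1-u)h,\ (1-u)p\bigr)\]
for $(t,s,u) \in \R^2 \times [0,1]$. Each $\Phi_u$ is polynomial in $(t,s)$, and the coefficients vary polynomially (in fact linearly) with $u$, so the family is a homotopy through polynomial maps and qualifies as a P-homotopy.

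At $u=0$ the map recovers $F$, and at $u=1$ the map reduces to $(t,s)\mapsto(t,s,0,0)$, the standard linear inclusion of $\R^2$ as a coordinate plane in $\R^4$. This is by definition a trivial surface knot (an unknotted plane). Verifying the theorem therefore reduces to checking these two endpoints, noting that the interpolation is polynomial in $(t,s,u)$, and confirming that this fits the paper's notion of P-homotopy. The argument is the two-dimensional analogue of the familiar fact that every polynomial classical knot is P-homotopic to the unknot by coordinate-wise scaling to its linear part.

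The main obstacle I anticipate is a definitional one: whether a P-homotopy is permitted to pass through polynomial maps with self-intersections. Since the statement uses the word \emph{homotopy} rather than \emph{isotopy}, I would interpret it in the weaker sense that allows intermediate stages to fail injectivity, in which case the straight-line family above immediately does the job. If instead the intended definition requires each slice $\Phi_u$ to remain an embedding, the simple interpolation can fail, and a more delicate argument is needed --- for instance, first exploiting the \emph{standard outside a compact region} hypothesis to rewrite $F$ so that it already agrees with $(t,s,0,0)$ outside some disk $D_R \subset \R^2$, and then shrinking the knotted portion inside $D_R$ while carefully monitoring injectivity, perhaps via a controlled reparametrization together with the straight-line homotopy restricted to $D_R$. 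I would first clarify the intended definition of P-homotopy in force and, assuming the weaker formulation suffices, present the linear interpolation above as the proof.
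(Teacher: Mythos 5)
There is a genuine gap, and it is exactly the definitional fork you flagged: you chose the wrong branch. In this paper a P-homotopy is not merely a homotopy through polynomial maps; the proof given in the paper insists that every intermediate stage $\phi_u$ remain an \emph{immersion}, and the remark immediately following the theorem builds the whole singularity-index machinery on the fact that the homotopy keeps the projection to $\R^3$ (hence the singular set) fixed while only the over/under data changes, so that only finitely many parameter values $u$ yield non-embedded stages. Under your weaker reading the theorem is vacuous --- any two polynomial maps are joined by a straight-line polynomial homotopy --- which is itself a strong signal that the stricter notion is intended. Your interpolation $\Phi_u=(1-u)F+u(t,s,0,0)$ violates both requirements: all four coordinates move, so the $\R^3$-projection and its double-point set vary with $u$, and nothing prevents the Jacobian $(1-u)DF+uD\iota$ from dropping rank at some $(t,s,u)$ (the first two rows form $(1-u)J_{(f,g)}+uI_2$, which is singular whenever $-u/(1-u)$ is an eigenvalue of $J_{(f,g)}$, and the remaining rows are scaled by $1-u$ and need not rescue the rank). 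Your fallback sketch for the stricter definition is not developed into an argument.

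The paper's resolution, which is the missing idea, is to perturb \emph{only the last coordinate}: after arranging (up to polynomial isotopy) that $p(t,s)$ has odd degree, set $\phi_u(t,s)=\bigl(f,g,h,p(t,s)+u^2(t+s)\bigr)$. This keeps the first three coordinates, hence the projection and the immersion property, unchanged for every $u$; and since $\partial p/\partial t$ and $\partial p/\partial s$ are even-degree polynomials bounded below after adding a constant, for $u\geq M=\sqrt{R}$ both partials of $\tilde p_u=p+u^2(t+s)$ are strictly positive, so the last coordinate is monotone and $\phi_M$ is a trivial surface knot. This is the two-dimensional analogue of adding $u^2t$ to the height function of a polynomial long knot, not of coordinate-wise scaling to the linear part. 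To repair your proof you would need to replace the straight-line family by a deformation of this type (or otherwise prove that each of your stages is an immersion and that the endpoint is reached through controlled singular stages).
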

 \begin{proof}
 	Let $F$ be represented by the the map $\psi: \R^{2} \rightarrow \R^{4}$ defined by 
 	\[ \phi(t,s) =\big( f(t,s), g(t,s), h(t,s), p(t,s) \big).\]
 	Up to polynomial isotopy we can assume that $p(t,s)$ to be of odd degree. Now,
 	for $u \in \R$, we define a family of maps $\phi_{u}: \R^{2} \rightarrow \R^{4}$ by
 	\[ (t,s) \rightarrow \big( f(t,s), g(t,s), h(t,s), p(t,s)+u^{2}(t+s) \big).\] 
 	Set $p(t,s)+u^{2}(t+s) = \tilde{p}_{u}(t,s)$.
 	For each $u \in \R$, the  Jacobian matrix is
 	\[\begin{pmatrix}
 		\dfrac{\partial f}{\partial t} & \dfrac{\partial f}{\partial s} \\[0.3cm]
 		\dfrac{\partial g}{\partial t}  & \dfrac{\partial g}{\partial s} \\[0.3cm]
 		\dfrac{\partial h}{\partial t} & \dfrac{\partial h}{\partial s} \\[0.3cm]
 		\dfrac{\partial p}{\partial t}+u^{2} & \dfrac{\partial p}{\partial s}+ u^{2}
 	\end{pmatrix}\]
 	has rank $2$ since $\psi$ is an immersion.
 	Since $p(t,s)$ is of odd degree, all partial derivatives of $p(t,s)$, $\frac{\partial p}{\partial t}$ and $\frac{\partial p}{\partial s}$ are even degree polynomials. As shown in  \cite{Mis14}, there exist $R_{1},R_{2} \in \R$ such that
 	$\frac{\partial p}{\partial t} +R_{1} >0$ and $\frac{\partial p}{\partial s} +R_{2} >0$\\
 	Take $R =\max \{R_{1},R_{2}\}$. Therefore
 	
 	\[	\frac{\partial p}{\partial t} +R  >0\quad \hfill
 		\text{and} \quad \frac{\partial p}{\partial s} +R  >0 .\]
 	
 	Let $M= \sqrt{R}$. Therefore,	for $u \geq M$,
 	\begin{align*}
 		\dfrac{\partial \tilde{p}_{u}(t,s)}{\partial t}&= \dfrac{\partial p_{u}(t,s)}{\partial t}+ u^{2}  > 0,\\
 		\text{and} \quad \dfrac{\partial \tilde{p}_{u}(t,s)}{\partial s}&= \dfrac{\partial p_{u}(t,s)}{\partial s}+ u^{2}  > 0 .
 	\end{align*}   
 	i.e., $\tilde{p}_{u}(t,s)$is monotonically increasing for $u \geq M$, ensuring that $\phi_u$ is a trivial surface knot.\\
 	Therefore, we have a homotopy $\phi: \R^{2} \times [0,M] \rightarrow \R^{4}$ defined by 
 	\[\phi(t,s,u)=\big( f(t,s), g(t,s), h(t,s), p(t,s)+u^{2}(t+s) \big)\]
 	such that $\phi(t,s,0)=\big( f(t,s), g(t,s), h(t,s), p(t,s)\big)$ is the original long 2-knot $F$ and $\phi(t,s,M)=\big( f(t,s), g(t,s), h(t,s), p(t,s)+ M^{2}(t+s)\big)$ presents a trivial long 2-knot. And for each $u \in [0,M]$, $\phi(t,s,u)$ is an immersion. 
  \end{proof}
 To understand how such homotopies pass through singular stages, we define the singularity index, generalizing the analogous notion from classical long knots \cite{Mis14}.
 
 \begin{remark}
 	In Theorem \ref{th:homotopy}, for each $u \in [0,M]$ we get an immersion $\phi_{u}$ in $\R^{4}$ with same projection as $F$ in $\R^{3}$. This implies that the singular set is same for each $\phi_{u}$ and the one parameter family $\{p(t,s)+u^{2}(t+s) \, \vert \, u \in [0,M]\}$ changes the over/under information through the homotopy $\phi$. Therefore, there are finitely many $ u \in [0,M]$ for which $\phi_{u}$ is an immersion in $\R^{4}$. Let us refer them as  singular long $2$-knots. The number of these singular long $2$-knots in the homotopy $\phi$ is called the singularity index of the polynomial representation $\psi$ of $F$.
 \end{remark}
 \begin{definition}
 	{\it Singularity Index} of a  long $2$-knot $F$ is defined to be the minimum number of singularity indices over all polynomials $\psi$ presenting $F$ and denoted by {\it SI(F)}.
 \end{definition}
 \begin{definition}
 	A long $2$-knot is said to be {\it simple} if it has a projection on some $\R^3$ such that a singular set consists of finitely many double point curves only. 
 \end{definition}
 Next, we establish that any classical polynomial knot can be used to construct a simple long $2$-knot whose singularity index is less than or equal to that of the original knot.
 \begin{theorem}\label{th:sing_index}
 	Given a polynomial knot $K$, with the  singularity index $n$ there exists a simple polynomial $2$-knot $P_K$ with the singularity index less than or equal to $n$.
 \end{theorem}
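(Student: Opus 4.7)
The plan is to exhibit $P_K$ as a polynomial extrusion of $K$ along a free parameter $s$, chosen so that the crossing equations governing the four-dimensional geometry of $P_K$ reduce identically to those of the defining deformation of $K$. Then the singularity count transfers for free, and simplicity follows from an elementary projection argument.

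Concretely, I would begin with a polynomial parametrization $K(t) = (f(t), g(t), h(t))$ realising $\mathrm{SI}(K) = n$ (taking $h(t)$ to be of odd degree, which is permissible by polynomial isotopy as already exploited in Theorem~\ref{th:homotopy}), and set
\[
P_K(t,s) = \bigl(f(t),\, g(t),\, s,\, h(t)\bigr).
\]
Injectivity is immediate: equality of the third coordinate forces $s_1 = s_2$, and equality of the other three then reduces to the embedding property of $K$. Since the polynomial long knot $K$ is asymptotically linear outside a compact $t$-interval, the image of $P_K$ coincides with a standard plane once $|t|$ is large, which after a compactly supported polynomial isotopy of $\R^4$ yields a bona fide long $2$-knot. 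To see that $P_K$ is simple, I would project out the fourth coordinate: $\pi \circ P_K(t,s) = (f(t), g(t), s)$. A double point requires $s_1 = s_2$ together with $(f,g)(t_1) = (f,g)(t_2)$; the latter picks out finitely many pairs $(t_1, t_2)$ (the classical crossings of $K$ in its $xy$-projection), and each lifts to a single double point curve parametrized by the shared value of $s$, with no triple points or branch points for generic $K$.

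For the singularity bound, I would apply the deformation of Theorem~\ref{th:homotopy} to $P_K$, giving $\phi_u(t,s) = (f(t), g(t), s, h(t) + u^2(t+s))$. Any coincidence $\phi_u(t_1, s_1) = \phi_u(t_2, s_2)$ with $(t_1, s_1) \ne (t_2, s_2)$ forces $s_1 = s_2$, $f(t_1) = f(t_2)$, $g(t_1) = g(t_2)$, and
\[
h(t_1) - h(t_2) + u^2(t_1 - t_2) = 0,
\]
which are exactly the crossing equations for the one-parameter family $K_u(t) = (f(t), g(t), h(t) + u^2 t)$ that computes $\mathrm{SI}(K)$. Hence the singular $u$-values of $\phi_u$ are in bijection with those of $K_u$, so the singularity index of this particular polynomial representation of $P_K$ is at most $n$, and therefore $\mathrm{SI}(P_K) \le n$. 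The main obstacle I anticipate is promoting the extruded embedding to one that is standard outside a \emph{compact} subset of $\R^4$ rather than outside a cylinder in the $s$-direction; the required compactly supported polynomial correction must be engineered so as neither to destroy simplicity nor to introduce new singular $u$-values in the homotopy, and arranging both simultaneously within the polynomial category is the most delicate step.
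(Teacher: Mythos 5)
Your proposal is correct and takes essentially the same approach as the paper: the paper's map is $P_K(t,s) = \bigl(f(t)+s,\, g(t)+s,\, s,\, h(t)\bigr)$, which differs from your extrusion only by a linear shear of $\R^4$, and it then runs the identical argument --- injectivity forced by the $s$-coordinate, one double curve over each double point of $K$, and the deformation $h(t)+u^2(t+s)$ whose singular $u$-values coincide with those of the classical homotopy for $K$, giving $\mathrm{SI}(P_K)\le n$. The compactness issue you flag (standard outside a cylinder in $s$ rather than outside a compact set) is likewise left unaddressed in the paper's proof, so your attempt is, if anything, slightly more careful on that point.
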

 \begin{proof}
 	Let $K : \R \rightarrow \R^{3}$ be a polynomial knot defined as $K(t) = (f (t), g(t), h(t))$ with singularity index $n$, where $(f(t),g(t))$ is an immersion in $\R^{2}$ and $h(t)$ is an odd degree polynomial with positive co-efficient [Remark 2.4, \cite{Mis14}].
 	Now, we construct a long 2-knot given by $P_{K}: \R \times \R \rightarrow \R^{4}$ and defined by
 	\[P_{K}(t,s)= (f(t)+s, g(t)+s, s, h(t)).\]
 	$K$ being an immersion, the  Jacobian matrix for $P_{K}$ 
 	\[\begin{pmatrix}
 		\dfrac{\partial f}{\partial t} & 1 \\[0.3cm]
 		\dfrac{\partial g}{\partial t} & 1 \\[0.2cm]
 		0 & 1 \\[0.1cm]
 		\dfrac{\partial h}{\partial t} & 1 
 	\end{pmatrix}\]
 	has rank two. Therefore, $P_{K}$ is an immersion.
 	
 	Suppose $(t_{1},s_{1})$ and $(t_{2},s_{2})$ be such that
 		\begin{align*}
 		f(t_{1})+s_{1}&=f(t_{2})+s_{2},\\
 		g(t_{1})+s_{1}&=g(t_{2})+s_{2}, \\
 		s_{1}&= s_{2}, \\
 		h(t_{1}) &= h(t_{2}).
 	\end{align*}
 	This implies $
 		f(t_{1})=f(t_{2}),\;
 		g(t_{1})=g(t_{2}),\;
 		h(t_{1}) = h(t_{2}).$
 	Since, $K$ is an embedding, we have $ t_{1}=t_{2}$.
 	Hence, $(t_{1},s_{1}) = (t_{2},s_{2})$ and therefore $P_{K}$ is an injective immersion  i.e an embedding.
 	The singular set of the long $2$-knot contains only double curves, one for each double point of $K$. If $(t_{j}, t_{k})$ are the values of $t$ for a crossing in the projection of $K$ then the pair $(t_{j},s)$ and $(t_{k},s)$ will provide a double curve in the projection of $P_{K}$ in $\R^{3}$.
 	
 	By Theorem \ref{th:homotopy}, we have a continuous map $\Psi : \R^{2} \times [0, R] \rightarrow
 	\R^{4}$ such that $\Psi(t,s, 0) = (f (t)+s, g(t)+s,s, h(t))$ is the given knot $P_K$ and $\Psi(t,s, R) = (f (t) +s, g(t) +s,  s, h(t) + R^{2} (t+s))$ is a trivial knot and for each $u \in [0, R]$ the map $\Psi((t, s),u) = 
 	(f (t) + s, g(t) + s, s, h(t) + u^{2} (t+s))$ is an immersion.
 	Now, restriction of this homotopy $\Psi$ on $\R \times [0, R]$, i.e., $\Psi \vert_{\R \times [0,R]} \R \times [0,R] \rightarrow \R^{3}$ given by
 	\[ (t,u) \rightarrow (f(t)+u,g(t)+u, h(t)+ u^{2}t),\]
 	is the homotopy that transforms the long knot $K$ to a trivial classical knot. Now, $K$ has singularity index $n$. Therefore, there are at least $n$ values of $u$, say $u_{1}, \cdots u_{n}$ for which $\Psi(t,u_{i})$ is a singular knot. For those values of $u_{i}, \, i=1, \cdots, n$, 
 	\begin{align*}
 		f(t_{a})+u_{i}&=f(t_{b}) +u_{i},\\
 		g(t_{a})+u_{i}&=g(t_{b}) +u_{i},\\
 		h(t_{a}) +u_{i}^{2} t_{a}&= h(t_{b}) +u_{i}^{2} t_{b},	
 	\end{align*} for all pairs $(t_{a},t_{b})$ representing the crossing points. This implies 
 	\begin{align*}
 		f(t_{a})+u_{i}+s&=f(t_{b}) +u_{i}+s,\\
 		g(t_{a})+u_{i}+s&=g(t_{b}) +u_{i}+s,\\
 		h(t_{a}) +u_{i}^{2} t_{a}&= h(t_{b}) +u_{i}^{2} t_{b},	
 	\end{align*} for all the pairs $(t_{a},s)$, $(t_{b},s)$, representing the double curves.\\
 	This suggests that $\Psi(t,s,u_{i})$ corresponding to $\Psi(t,u_{i})$ for $u_{i},\, i=1, \cdots, n$ are not embeddings in $\R^{4}$. This indicates the fact that the long $2$-knot $P_{K}$ is transformed to the trivial surface knot diagram by exchanging over/under information in the set of $n$ number of double curves. Hence, the singularity index of $P_{K}$ is less or equal to $n$.
 \end{proof}
 \begin{example}	Here is an example of a long $2$-knot (Fig. \ref{fig:simplelong 2 knot}) constructed from the long trefoil given by the map \[ (t,s)\to (t^3-3t+s,t^4-4t^2+s,s,t^5-10t).\]
 \end{example}
 \begin{figure}[H]
 	\centering
 	\includegraphics[width=0.35\textwidth]{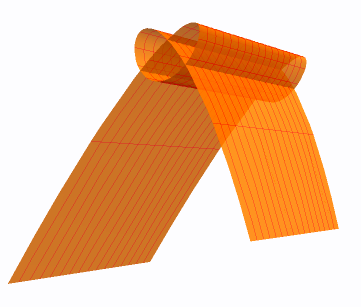}
 	\caption{Projection of the simple long trefoil 2-knot on a hyperplane}
 	\label{fig:simplelong 2 knot}
 \end{figure}
 We now present two constructions of knotted planes in $\R^4$, both derived from a given long knot $K$ using spinning operations.

 \subsection{Constructing knotted planes by spinning }
 
 {\bf Construction I:}
  This construction starts with a knotted disc bounded by the connected sum $K \sharp K^*$ and extends it to a full knotted plane via a polynomial homotopy.
 \begin{lemma}\label{lem:disc}
 	For every polynomial $K$, there exists a knotted disc bounded by $K \sharp K^{*}$.
 \end{lemma}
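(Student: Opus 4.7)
The strategy is to adapt the spinning construction of Theorem~\ref{th:spun} by performing only a \emph{half}-spin, rotating through $\theta \in [0, \pi]$ instead of $[0, 2\pi]$. A full spin sweeps the knotted arc into a $2$-sphere; a half-spin sweeps it into a $2$-disc whose boundary is the closed curve formed by the original arc together with its reflection across $\R^2$. Since the reflection represents the mirror knot, this boundary will close up to $K \sharp K^{*}$.

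Concretely, I would start with the polynomial parametrization of $K$ as a knotted arc $k:[a,b]\to \R^{3}_{+}$ given by $\phi(t)=(f(t),g(t),h(t))$ with $h(a)=h(b)=0$ and $h(t)>0$ on $(a,b)$, exactly as in the proof of Theorem~\ref{th:spun}. Then I define $F:[a,b]\times[0,\pi]\to\R^{4}$ by
\[
F(t,\theta)=\LP f(t),\,g(t),\,h(t)\cos\theta,\,h(t)\sin\theta\RP.
\]
Since $h$ vanishes at the endpoints, the vertical edges $\{a\}\times[0,\pi]$ and $\{b\}\times[0,\pi]$ each collapse to a single point in $\R^{2}\subset\R^{4}$, and an elementary point-set argument (parallel to the one invoked for the spun sphere) shows that the quotient of the rectangle is homeomorphic to $D^{2}$ and that $F$ descends to a locally flat embedding into $\R^{4}$. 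The boundary of this disc consists of the arc traced at $\theta=0$, which is $k$ itself in $\R^{3}\times\{0\}$, together with the arc traced at $\theta=\pi$, which is the reflection $k^{*}$ of $k$ through the $XY$-plane. Joining these two arcs at the common endpoints $(f(a),g(a),0,0)$ and $(f(b),g(b),0,0)$ yields a single closed curve in $\R^{3}\times\{0\}$ that represents $K\sharp K^{*}$.

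To deliver a polynomial parametrization of the disc, I would then replace $\cos\theta$ and $\sin\theta$ by their Chebyshev approximations on $[0,\pi]$, entirely in analogy with the replacement used in the proofs of the spun and twist-spun theorems above, so that the final map $[a,b]\times[0,\pi]\to\R^{4}$ has polynomial coordinate functions.

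The principal point requiring care is the identification of the boundary curve with $K\sharp K^{*}$ rather than with some other curve built from $k$ and its reflection. This is standard once one notes that $k^{*}$, being the reflection of $k$ across $\R^{2}$, represents the mirror long knot, that the two arcs meet transversely only at the two shared endpoints lying on the axis of rotation, and that completing $k\cup k^{*}$ via these endpoints is precisely the long-knot connected-sum operation on $K$ and $K^{*}$. A secondary technicality is that the parametrization has cone-like corners at the two collapsed edges where it fails to be a smooth immersion, so the disc is verified to be locally flat, in the sense of the definitions recalled in Section~\ref{sec:surface}, rather than smoothly embedded there.
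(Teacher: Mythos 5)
Your proposal is correct and follows essentially the same route as the paper: a half-spin of the knotted arc over $[0,\pi]$, with the boundary consisting of $k$ at one end of the angular interval and its mirror $k^{*}$ at the other, joined at the two endpoints on $\R^{2}$ to form $K \sharp K^{*}$. The only differences are cosmetic (a permutation of which coordinates carry $\cos$ and $\sin$) plus your added Chebyshev step, which the paper omits here but uses elsewhere.
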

 \begin{proof}
 	First, take a knotted arc $k$ of the polynomial knot $K$ in the upper half space $\R^{3}_{+}$ with only two endpoints on the boundary $\R^{2}$. Let the knotted arc be presented by $ \big(f(t),g(t),h(t)\big)$ where $t \in [a,b]$ where $h(a)=0=h(b)$ and $h(t) >0 \quad \text{for} \, t\in (a,b)$. Spinning this arc around $\R^{2}$ from angle $0$ to $2\pi$, results into a semi sphere or a $2$-disc in $\R^{4}$ bounded by $K \sharp K^{*}$ where $K^{*}$ is the mirror image of $K$. Denote it by $D$. Then $D$ can be  parametrized by 
 	\[[a,b] \times [0,\pi] \rightarrow \R^4 \]
 	\[(t,s) \rightarrow \LP f(t),g(t),h(t) \sin s, h(t) \cos s \RP,\]
 	where $s=0$ and $s=\pi$ will map to the knotted arcs $k$ and $k^{*}$ on the boundary of $D$, given by
 	\[ \LP f(t),g(t),0, h(t)\RP\, \text{for} \, t\in [a,b],  \]
 	\[ \LP f(t),g(t),0, -h(t)\RP\, \text{for} \, t\in [a,b] \]
 	respectively.
 \end{proof}
 \begin{example}
 	Here is an example of a knotted disc with boundary $K \sharp K^{*}$ where $K$ is the long trefoil (Fig. \ref{fig:disc}), given by 
 	\[[a,b] \times [0,\pi] \rightarrow \R^4 \]
 	\[(t,s) \rightarrow \LP t^3-3t,t^5-10t,(-t^4 + 4 t^2 + 3) \sin s,(-t^4 + 4 t^2) + 3) \cos s \RP\]
 \end{example}
 \begin{figure}[H]
 	\centering
 	\includegraphics[width=0.35\textwidth]{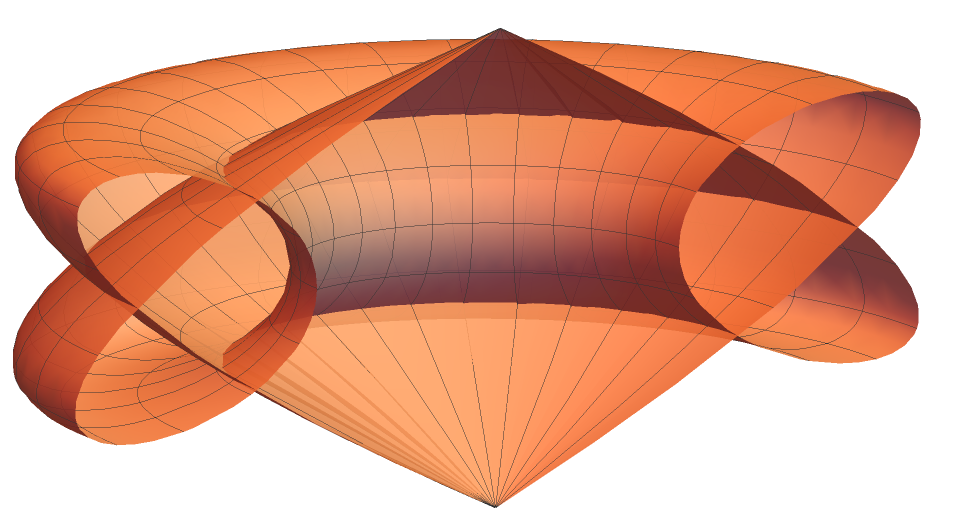}
 	\caption{Knotted disc with boundary $K \sharp K^{*}$}
 	\label{fig:disc}
 \end{figure}
 \begin{theorem}
 	For every polynomial knot $K$, there exists a knotted plane that contains an knotted disc bounded by $K \sharp K^{*}$.
 \end{theorem}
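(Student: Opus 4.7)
My plan is to build the desired knotted plane by puncturing the spun $2$-sphere of $K$ at a point outside the knotted disc $D$ provided by Lemma~\ref{lem:disc}, and then pushing the puncture out to infinity via a smooth diffeomorphism of $\R^4$.

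First, I observe that the disc $D$ of Lemma~\ref{lem:disc}, parametrized by
\[D(t,s)=\bigl(f(t),\,g(t),\,h(t)\sin s,\,h(t)\cos s\bigr),\quad (t,s)\in[a,b]\times[0,\pi],\]
is exactly the half of the spun $2$-sphere $S$ of $K$ corresponding to $\theta\in[-\pi/2,\pi/2]$, where $S$ is parametrized by $(t,\theta)\mapsto\bigl(f(t),g(t),h(t)\cos\theta,h(t)\sin\theta\bigr)$ on $[a,b]\times[0,2\pi]$: under the substitution $\theta=\pi/2-s$ the two parametrizations coincide. Hence $S$ is a knotted $2$-sphere in $\R^4$ that contains $D$.

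Next, I choose a point $p\in S\setminus D$, say $p$ corresponding to $\theta=\pi$ and $t=(a+b)/2$. After a tiny ambient isotopy supported near $p$ and disjoint from $D$, I may further assume that $S$ is perfectly flat on a small disc neighborhood $N$ of $p$, i.e.\ that $N$ lies in the tangent plane $T_pS$. After an affine change of coordinates carrying $p$ to the origin and $T_pS$ to the coordinate $XY$-plane of $\R^4$, I build a smooth diffeomorphism $\Psi:\R^4\setminus\{p\}\to\R^4$ that equals the spherical inversion $x\mapsto x/\lvert x\rvert^2$ on a small ball around $p$, equals the identity outside a slightly larger ball, and smoothly interpolates between the two via a monotone bump function in the annular transition region.

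Setting $P:=\Psi(S\setminus\{p\})$, the inverted part $\Psi(N\setminus\{p\})$ is exactly a flat planar annulus in the $XY$-plane reaching out to infinity, while outside the transition region $\Psi$ is the identity and $P$ coincides with $S$ minus a tiny neighborhood of $p$, a compact set still containing the entire disc $D$. Thus $P$ is a proper, locally flat embedding of $\R^2$ into $\R^4$ which is standard outside a compact region, i.e.\ a long $2$-knot, and by construction $D\subset P$. The main obstacle is verifying that the interpolation keeps $\Psi$ a diffeomorphism of $\R^4\setminus\{p\}$ and keeps its restriction $\Psi|_{S\setminus\{p\}}$ an embedding throughout the transition annulus; this is a routine exercise in differential topology once the width of the transition region is chosen small enough compared to the local geometry of $S$ at $p$. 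Once it is set up, a polynomial parametrization of $P$ can then be recovered by replacing the trigonometric functions in the spun sphere and the bump function in $\Psi$ with their Chebyshev approximations, following the template of Theorem~\ref{th:spun} and the $d$-twist spun construction.
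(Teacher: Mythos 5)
Your overall idea---puncture the spun $2$-sphere of $K$ at a point away from the disc $D$ and send the puncture to infinity---is natural (it echoes the paper's own remark that a $2$-knot in $S^4$ yields a long $2$-knot by deleting a point), but the central step as you have written it cannot work. You claim to build a \emph{diffeomorphism} $\Psi:\R^4\setminus\{p\}\to\R^4$. No such map exists: $\R^4\setminus\{p\}$ is homotopy equivalent to $S^3$, while $\R^4$ is contractible, so the two spaces are not even homotopy equivalent. Concretely, your interpolation cannot be injective on the ambient space: the inversion sends a small punctured ball around $p$ onto the complement of an enormous ball, while the identity fixes the region outside the slightly larger ball, so the two pieces of the image overlap massively. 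The ``routine exercise in differential topology'' you defer to is therefore not a technical verification but an impossibility. To salvage the idea you would have to either (i) pass to $S^4=\R^4\cup\{\infty\}$, choose the puncture point $q$ on the sphere, and use a stereographic-type identification $S^4\setminus\{q\}\cong\R^4$ (which moves the ambient point at infinity, not just the surface), or (ii) define the straightening map only on $S\setminus\{p\}$ (or a tubular neighborhood of it) and then separately verify that the image is a properly embedded, locally flat plane that is standard outside a compact set --- neither of which is done in your write-up. A secondary issue: the inversion $x\mapsto x/|x|^2$ is not polynomial and cannot be Chebyshev-approximated on an unbounded domain, so your final sentence about recovering a polynomial parametrization does not go through either.

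For contrast, the paper avoids all of this by staying entirely constructive: it takes the disc $D$ bounded by $K\sharp K^*$ from Lemma~\ref{lem:disc} and glues onto its boundary an explicit annulus $A$ built from the P-homotopy of Theorem~\ref{th:sing_index}, which sweeps the two arcs $k$ and $k^*$ out to trivial arcs as the parameter $s$ runs to $\pm\infty$; the union $D\cup A$ is then a knotted plane with explicit piecewise formulas $\psi_1,\phi,\psi_2$. That route never needs to remove a point from the ambient space, and it directly produces the parametrization the paper is after.
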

 \begin{proof}
 	By Lemma~\ref{lem:disc}, we have a knotted disc $D$ with boundary $K \sharp K^{*}$, given by 
 	$\phi: [a,b] \times [0,2\pi] \rightarrow \R^{4},$ where
 	\[\phi(t,s) = \big(f(t),g(t),h(t) \sin s, h(t) \cos s\big).\]
 	Now we construct an annulus $A$ with boundary $K \sharp K^{*}$ in one end and an unknotted $S^{1}$ in the other end by using the P-homotopy map mentioned in Theorem \ref{th:sing_index}.
 	For $ t\in[a,b],\, s\in [-\infty,0]$, we define
 		\begin{align*}
 		P_{K}:& \;\psi_{1}(t,s) = \big(f(t)-s,g(t)-s, s, h(t)+s^{2}t \big) ,\\
 		P_{K^{*}}:& \; \psi_{2}(t,s)= \big(f(t)+(s-\pi),g(t)+(s-\pi),-(s-\pi), -h(t)+(s-\pi)^{2}t \big),
 	\end{align*}
 	where $\psi_{1}(t,s)$ and $\psi_{2}(t,s)$ are trivial diagrams for all $s \leq -R$ and $s \geq \pi+R$ by [Lemma 2.7, \cite{Mis14}]. Also, we have the following:
 	\begin{itemize}[itemsep=3pt]
 		\item $\psi_{1}(t,0)= \phi(t,0)$, $\psi_{2}(t,\pi)=\phi(t,\pi)$ presents $k$ and $k^{*}$ respectively.
 		\item $k$ and $k^{*}$ are joined at the endpoints $\psi_{1}(a,0)= \psi_{1}(a,0)$, $\psi_{2}(b,\pi)=\psi_{2}(t,\pi)$.
 		\item The values of $\psi_{1}(a,s)$ for $s\in [-\infty,0]$ and $\psi_{2}(b,s)$ for $s \in [\pi,\infty]$ are equal since $h(a)=0=h(b)$. Therefore, $P_{K}$ and $P_{K^{*}}$ are joined along the lines $\psi_{1}(a,s),\, s\in [-\infty,0]$ and $\psi_{2}(b,s),\, s \in [\pi,\infty]$. \item Then the annulus $A$ is obtained by restricting $P_{K}\, \cup \, P_{K^{*}}$ to $s \in [-R,0] \cup [0,R]$. 
 		\item The other end of the annulus $A$ is a unknotted circle created by $\psi_{1}(t,-R)$ and $ \psi_{2}(t,\pi+R) $ which are unknotted arcs joined at the endpoints $\psi_{1}(a,-R)=\psi_{1}(a,\pi+R)$ and $\psi_{2}(a,-R)=\psi_{2}(a,\pi+R)$.
 		\item Outside the interval $s \in [-R,R]$, $\psi_{1}(t,s) \cup \psi_{2}(t,s) $ will always be a unknotted $S^{1}$.
 	\end{itemize}
 	So, $D \cup A$ joined along the boundary $K \sharp K^{*}$   gives us an embedding of an open knotted disc with boundary an unknotted $S^{1}$. Therefore, extending the interval for $s$ from $[-R,R]$ to $\R$ we get a  parametrization of a knotted plane given by 
 	\[\Phi: (a,b) \times 
 	\R \rightarrow \R^{4}\] 
 	\[\Phi=\begin{cases}
 		\psi_{1} & s \in (-\infty,0] \\
 		\phi & s\in [0, \pi] \\
 		\psi_{2} & s \in [\pi, \infty)
 	\end{cases}  \]
 	This completes the proof.
 \end{proof}
 
 \begin{remark}
 	This construction shows that $K \sharp K^*$ bounds a smooth disc in $D^4$, hence it is a slice knot. \cite{CarSai}.
 \end{remark}
 \begin{example}
 	For the long trefoil knot $K$, we have the knotted plane  given by 
 	$\Phi: (a,b) \times 
 	\R \rightarrow \R^{4},$ which is defined by 
 	\[\Phi=\begin{cases}
 		\psi_{1} & s \in (-\infty,0], \\
 		\phi & s\in [0, \pi], \\
 		\psi_{2} & s \in [\pi, \infty),
 	\end{cases}  \]
 	\begin{align*}
 		\text{where} \quad \phi(t,s)&=\big(t^3-3t,t^5-10t,(-t^3+4t^2+3) \sin s, (-t^3+4t^2+3) \cos s\big),\\
 		\psi_{1}(t,s) &= \big(t^3-3t-s,t^5-10t-s, s, (-t^3+4t^2+3)+s^{2}t \big),\\
 		\psi_{2}(t,s)&= \big(t^3-3t+(s-\pi),t^5-10t+(s-\pi),-(s-\pi), (t^3-4t^2-3)+(s-\pi)^{2}t \big).
 	\end{align*}
 	In Fig. \ref{fig: slice plane}, $D$ is shown in orange color and the annulus $A$ is shown in blue and green for $P_{k}$ and $P_{K^{*}}$ respectively.
 \end{example}
 \noindent
 \textbf{Construction II:}
 This method spins an arc of a long knot $K$ extending its one end to the infinity in $\R^3_+$ to produce a knotted plane in $\R^4$.
  \begin{figure}[H]
 	\centering
 	\includegraphics[width=0.5\textwidth]{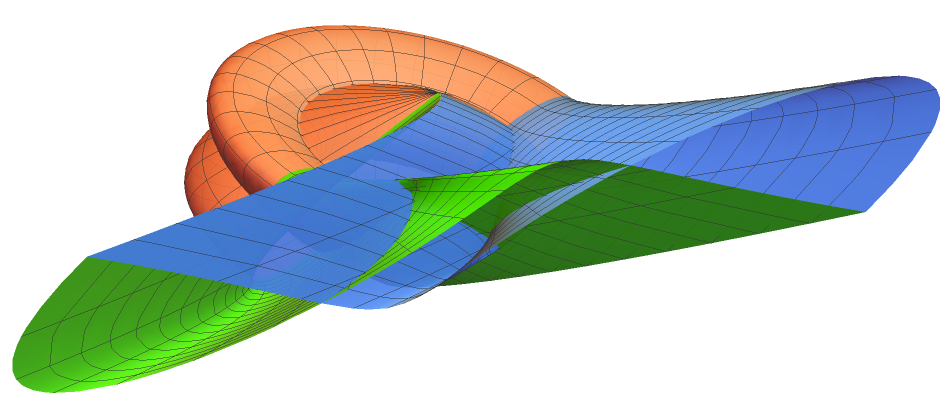}
 	\caption{Knotted plane with a knotted disc bounded by $K \sharp K^{*}$.}
 	\label{fig: slice plane}
 \end{figure}
 \begin{theorem}
 	For a polynomial knot $K$ given by
 	$\big\{(f(t),g(t),h(t))\,\vert\, t \in \R\big\}$,
 	there exists a polynomial 2-knot obtained from spinning operation with  parametrization given by $\psi': [a,\infty] \times [0,2\pi] \rightarrow \R^4$ which is defined by 
 	\[\psi'(t,s)=\LP f_{1}(t),g_{1}(t),h_{1}(t)\,S(s), h_{1}(t)\,C(s)\RP.\]
 \end{theorem}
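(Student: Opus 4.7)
The plan is to adapt the spinning construction from Theorem~\ref{th:spun} to a half-infinite arc: where Artin's original spinning takes a compact arc with both endpoints on $\R^2$ and produces a sphere, here I spin a ray with one endpoint on $\R^2$ and the other escaping to infinity, which produces a plane.

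First, given the polynomial parametrization $(f(t), g(t), h(t))$ of the long knot $K$ on $\R$, I would apply a polynomial isotopy of $\R^3$ (for example, $h(t) \mapsto h(t) + c$ for a suitable constant $c$) to arrange that the modified function $h_1(t)$ has a unique real root at $t = a$ and satisfies $h_1(t) > 0$ for $t > a$. Setting $f_1 = f$, $g_1 = g$, the restriction to $t \in [a, \infty)$ is a properly, locally flatly embedded ray in $\R^3_+$ meeting $\partial \R^3_+$ transversely at the single point $t = a$, and it still carries the knotting of $K$. Spinning this ray about $\R^2$ via
\[ F(t,s) = \bigl(f_1(t),\, g_1(t),\, h_1(t)\cos s,\, h_1(t)\sin s\bigr), \qquad (t,s)\in[a,\infty)\times[0,2\pi],\]
descends to a continuous injection on the quotient of $[a,\infty)\times[0,2\pi]$ obtained by identifying $s = 0$ with $s = 2\pi$ and collapsing $\{a\}\times[0,2\pi]$ to a single point (since $h_1(a) = 0$). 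This quotient is homeomorphic to $\R^2$, and a point-set argument paralleling the one in Theorem~\ref{th:spun} shows $F$ is a proper, locally flat embedding of $\R^2$ into $\R^4$.

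To verify that $F$ is a long $2$-knot rather than a mere knotted plane, I would use the fact that $K$ itself is standard outside a compact region: for $|t|$ sufficiently large the arc $(f_1(t), g_1(t), h_1(t))$ behaves like a straight ray along the $z$-direction, and spinning a straight ray produces a standardly embedded flat half-plane in $\R^4$. Finally, replacing $\cos s$ and $\sin s$ by their Chebyshev polynomial approximations $C(s), S(s)$ on $[0, 2\pi]$ yields the polynomial parametrization
\[ \psi'(t,s) = \bigl(f_1(t),\, g_1(t),\, h_1(t)\,S(s),\, h_1(t)\,C(s)\bigr), \]
and by the same small-perturbation argument used in Theorem~\ref{th:spun} this remains a long polynomial $2$-knot isotopic to the smooth spun surface.

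The main obstacle is the degeneracy at $t = a$: the full circle $\{a\}\times[0,2\pi]$ is collapsed to a single point by $F$, and one must confirm that this apex is a genuine manifold point of the image rather than a cone singularity. This is handled exactly as in Theorem~\ref{th:spun}: the arc meets $\R^2$ transversely at $t = a$, so in a transverse slice the image locally looks like a smooth disk sitting inside $\R^3$, and hence local flatness holds at the apex. A secondary technical point is ensuring that the Chebyshev substitution preserves embeddedness globally on the unbounded domain, which follows from the fact that the substitution acts only in the angular $s$-direction on a compact interval, while the tail behavior in $t$ is controlled by the long knot condition on $K$.
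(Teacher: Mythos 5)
Your proposal follows essentially the same route as the paper: restrict the knotted arc to a ray $[a,\infty)$ with a single endpoint on $\partial\R^3_+$, spin it about $\R^2$, and then substitute Chebyshev approximations for $\cos s$ and $\sin s$; your extra care about the quotient topology at the apex and the standard-outside-a-compact-set condition only adds detail the paper omits. The one divergence is your suggested mechanism $h(t)\mapsto h(t)+c$ for producing $h_1$: when $h$ has even degree this cannot give a unique transverse root with $h_1>0$ beyond it while keeping all crossings in $[a,\infty)$ (e.g.\ $h(t)=t^2$), and the paper instead adds a higher odd-degree term with small coefficient so that one end genuinely escapes to infinity without disturbing the over/under data.
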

 \begin{proof}
 	We start with the knotted arc of $K$ in the upper half space $\R^{3}_{+}$. Instead of taking both endpoints of the knotted arc on $\R^{2}$, we put one endpoint on $\R^{2}$, and the other end goes to infinity. This happens when $h(t)$ is an odd degree polynomial. If $h(t)$ is an even degree, then we can add a term of a larger odd degree with a very small coefficient so that the over/under information is not disturbed (Fig. \ref{fig:knotinf}).
 	Let this knotted arc be given by
 	\[[a,\infty) \rightarrow \R^3\]
 	\[t \rightarrow \LP f_{1}(t),g_{1}(t),h_{1}(t)\RP .\]
 	
 	\begin{figure}[H]
 		\centering 
 		\includegraphics[width=0.2\textwidth,height=0.2\textwidth]{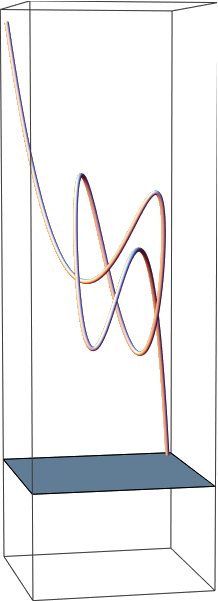}
 		\caption{Long trefoil arc for $t\in [a,\infty]$}
 		\label{fig:knotinf}
 	\end{figure}
 	Now, rotating this knotted arc around $\R^{2}$ results in a knotted plane in $S^{4}$ which can be  parametrized by 
 	\[[a,\infty) \times [0,2\pi] \rightarrow \R^4\]
 	\[t \rightarrow \big(f_{1}(t),g_{1}(t),h_{1}(t) \sin s, h_{1}(t) \cos s\big).\]
 	Replacing $\cos s$ and $\sin s$ with their corresponding Chebyshev approximations $C(s)$ and $S(s)$, respectively, we get the final polynomial  parametrization. This completes the proof.
 \end{proof}
 \begin{example}
 	An example of a knotted plane obtained from a long trefoil is shown in Fig. \ref{fig: spin plane}.
 	
 	\begin{figure}[H]
 		\centering
 		\begin{subfigure}[H]{0.4\linewidth}
 			\centering
 			\includegraphics[width=\textwidth,height=0.6\textwidth]{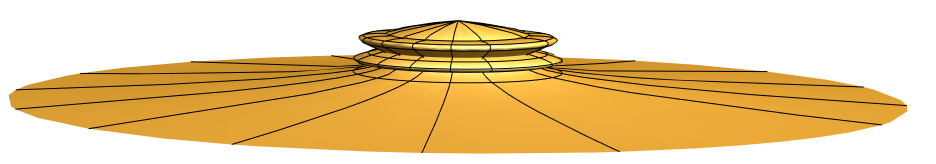}
 			\caption{ The knotted part inside a compact region }
 		\end{subfigure}
 		\quad
 		\begin{subfigure}[H]{0.4\linewidth}
 			\centering
 			\includegraphics[width=\textwidth,height=0.5\textwidth]{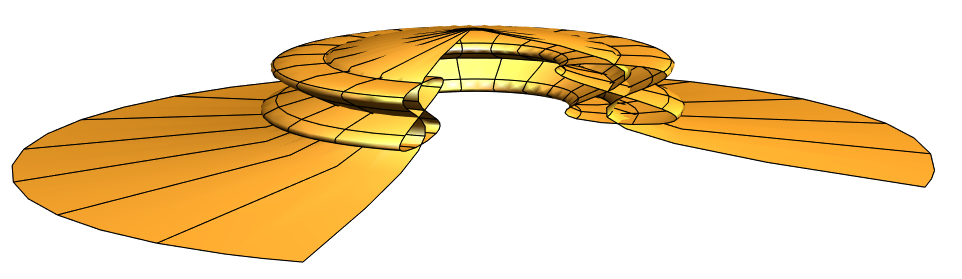}
 			\caption{Inside view}
 		\end{subfigure}
 		\caption{Knotted plane obtained by spinning} 
 		\label{fig: spin plane}
 	\end{figure}
 \end{example}

 \section{Conclusion:}
 Our constructions provide infinitely many example of knotted surfaces. In case of knotted spheres and knotted tori these surface knots are compact. However,
 Knotted planes are non-compact. We have seen that their constructions rely on classical knots, once the parametrization of classical knots is known these knotted surfaces are easily parametrized and their geometric properties can be studied. One future goal is to extend parametrization to broader classes of knotted surfaces. This includes non‐orientable embeddings (like projective planes or Klein bottles in four-dimensional space), surfaces of higher genus, or surfaces with branch points. Moreover, a degree‐minimization problem i.e finding the lowest polynomial degree that realizes a given knotted surface is another problem to study in future.
 
 \section{Acknowledgment}  
 The second author gratefully acknowledges financial support from the Council of Scientific and Industrial Research (CSIR), India.

\end{document}